\newtheorem{lem}{Lemma}
\newtheorem{example}{Example}
\newtheorem{con}{Conjecture}
\newtheorem{theo}{Theorem}
\newtheorem{cor}{Corollary}
\newtheorem{prop}{Proposition}
\numberwithin{equation}{section}
\newcommand{\id}{\operatorname{Id}}
\newcommand{\rev}{\operatorname{rev}}
\newcommand{\Hilb}{\mathrm{Hilb}}
\newcommand{\fa}{\mathfrak{a}}
\newcommand{\fb}{\mathfrak{b}}
\newcommand{\fh}{\mathfrak{h}}
\newcommand{\diag}{\operatorname{diag}}
\title[]{Spectra and eigenvectors of the Segre transformation}
\author{Ilse Fischer and  Martina Kubitzke}
    \address{Ilse Fischer \\
    Fakult\"at f\"ur Mathematik\\
     Universit\"at Wien\\
      Nordbergstrasse 15\\
       A-1090 Wien, Austria}
    \email{ilse.fischer@univie.ac.at}
\address{Martina Kubitzke\\
FB 12 -- Institut f\"ur Mathematik\\
Goethe-Universit\"at\\
Robert-Mayer Stra\ss e 10\\
 D-60325 Frankfurt am Main, Germany}
\email{kubitzke@math.uni-frankfurt.de}
\thanks{The first author acknowledges the support by the Austrian Science Foundation FWF, START grant Y463}
\begin{document}

\maketitle

\begin{abstract}
Given two sequences $\fa=(a_n)_{n\geq 0}$ and $\fb=(b_n)_{n\geq 0}$ of complex numbers such that their generating series are of the form 
$\sum_{n\geq 0}a_n t^n=\frac{\fh(\fa)(t)}{(1-t)^{d_{\fa}}}$ and $\sum_{n\geq 0}b_n t^n=\frac{\fh(\fb)(t)}{(1-t)^{d_{\fb}}}$, 
where $\fh(\fa)(t)$ and $\fh(\fb)(t)$ are polynomials, we consider their 
Segre product $\fa\ast\fb=(a_nb_n)_{n\geq 0}$. We are interested in the bilinear transformations that compute the coefficient sequence of 
$\fh(\fa\ast\fb)(t)$ from those of $\fh(\fa)(t)$ and $\fh(\fb)(t)$, where $\sum_{n\geq 0}a_nb_n t^n=\frac{\fh(\fa\ast\fb)(t)}{(1-t)^{d_{\fa}+d_{\fb}-1}}$. The motivation to study this problem comes from commutative algebra as the Hilbert series of the Segre product of two standard graded algebras equals the Segre product of the two individual Hilbert series.
We provide an explicit description of these transformations and compute their spectra. In particular, we show that the transformation matrices are 
diagonalizable with integral eigenvalues. We also provide explicit formulae for the eigenvectors of the transformation matrices.
Finally, we present a conjecture concerning the real-rootedness of $\fh(\fa^{\ast r})(t)$ if $r$ is large enough, where $\fa^{\ast r}=\fa\ast\cdots\ast \fa$ is the $r$\textsuperscript{th} Segre product of the sequence $\fa$ and the coefficients of $\fh(\fa)(t)$ are assumed to be non-negative.
\end{abstract}

\section{Introduction}
Given a sequence $\fa=(a_n)_{n\geq 0}$ of complex numbers, we consider its formal power series $\fa(t)=\sum_{n\geq 0}a_n t^n$. We are interested in the case that $\fa(t)$ can be written as 
\begin{equation}\label{eq:powerSeries}
\fa(t)=\sum_{n\geq 0}a_nt^n=\frac{h_0(\fa)+ h_1(\fa) t + \cdots + h_{d_{\fa}-1}(\fa) t^{d_{\fa}-1}}{(1-t)^{d_{\fa}}},
\end{equation}
which is possible if and only if the sequence $\mathfrak{a}$ is given as a polynomial function in $n$ of degree less than $d_{\fa}$, see e.\,g., \cite[Theorem 4.1.1]{Stanley-EC}. 
Adapting the notation in \cite{KubitzkeWelker}, we call $\fh(\fa):=(h_0(\fa),h_1(\fa),\ldots,h_{d_{\fa}-1}(\fa))$ the \emph{$h$-vector} and $\fh(\fa)(t)=h_0(\fa)+\cdots + h_{d_{\fa}-1}(\fa) t^{d_{\fa}-1}$ the \emph{$h$-polynomial} 
of the rational series $\fa(t)$ respectively of the sequence $\fa$. 
We are interested in the generating function of the \emph{Segre product} $\fa\ast\fb:=(a_nb_n)_{n\geq 0}$ of two sequences $\fa=(a_n)_{n\geq 0}$ and $\fb=(b_n)_{n\geq 0}$. If $\mathfrak{a}$ and $\mathfrak{b}$ can be expressed as polynomials in $n$ of degree less than $d_{\fa}$ and $d_{\fb}$, respectively, this generating series, the so-called \emph{Segre series} of the sequences $\fa$ and $\fb$, is given as
\begin{equation*}
(\fa\ast \fb)(t):=\sum_{n\geq 0}(a_nb_n)t^n=\frac{h_0(\fa\ast \fb)+\cdots +h_{d_{\fa}+d_{\fb}-2}(\fa\ast \fb)t^{d_{\fa}+d_{\fb}-2}}{(1-t)^{d_{\fa}+d_{\fb}-1}}.
\end{equation*}
 Our aim is to study the transformation of the numerator polynomials of $\fa(t)$ and $\fb(t)$ into the numerator polynomial of $(\fa\ast \fb)(t)$. 
Though, in principal, this is similar to the investigation of the Veronese transformation for formal power series in \cite{BrentiWelker}, there are 
two crucial differences. First, we do not get a single transformation matrix, describing the transformation of the complete $h$-vector but a 
transformation for each entry individually. Second, in our situation the considered transformation will be bilinear rather than linear. 
However, we will see that the transformation matrices for the different coefficients can be described as particular square block submatrices of a larger rectangular matrix. It will turn out that the study of those submatrices can be performed in a coherent way.

Our original motivation for this problem comes from commutative algebra. 
By the Hilbert-Serre Theorem (see \cite[Chapter 10.4]{Eisenbud}), the Hilbert series $\Hilb(A,t)=\sum_{i\geq 0}\dim_k A_it^i$ of a standard graded $k$-algebra $A=\bigoplus_{i\geq 0}A_i$ is of the form \eqref{eq:powerSeries}, where the degree of the denominator polynomial equals the Krull dimension of $A$. 
Given two standard graded $k$-algebras $A=\bigoplus_{i\geq 0} A_i$ and $B=\bigoplus_{i\geq 0} B_i$, it is customary to consider the \emph{Segre product}
of those algebras, defined by
\begin{equation*}
A\ast B=\bigoplus_{i\geq 0} A_i\otimes_k B_i,
\end{equation*}
see e.\,g., \cite{FroebergHoa,Harris,SinghWalter}. Note that if $A$ and $B$ are polynomial rings in $r$ variables, i.\,e., $A=k[x_1,\ldots,x_r]$ and $B=k[y_1,\ldots,y_r]$, the Segre product $A\ast B$ can be viewed as the homogeneous coordinate ring of the image of the \emph{Segre embedding} 
\begin{align*}
\mathbb{P}^{r-1}\times \mathbb{P}^{r-1}&\rightarrow \mathbb{P}^{r^2-1}\\ 
((v_0:v_1:\ldots:v_{r-1}),(w_0:w_1:\ldots:w_{r-1}))&\mapsto (v_0w_0:v_0w_1:\ldots:v_0w_{r-1}:v_1w_0:\ldots:v_{r-1}w_{r-1}),
\end{align*}
\cite[Chapter 13]{Eisenbud}. Clearly, the Hilbert series of the Segre product of two algebras $A$ and $B$ is given as the Segre series of the sequences $(\dim_k A_n)_{n\geq 0}$ and $(\dim_k B_n)_{n\geq 0}$, i.\,e.,
\begin{equation*}
\Hilb(A\ast B,t)=\sum_{n\geq 0}a_nb_n t^n=\Hilb(A,t)\ast \Hilb(B,t).
\end{equation*}
In the context of generating functions, this product is also frequently referred to as the \emph{Hadamard product}. 

The paper is structured as follows. Section \ref{sect:Trafo} focuses on the description of the $h$-vector transformation. 
We will derive this transformation for each single coefficient and then show how it can be traced back to the study of a family of certain square matrices (Theorem \ref{thm:trafoMain}). In Section \ref{sect:Spectrum}, we determine the 
spectra of the transformation matrices. In particular, we show that all transformation matrices are diagonalizable and have integral eigenvalues (Theorem \ref{thm:eigenvalues}). In Section \ref{sect:Eigenspaces} we compute the eigenspaces of the transformation matrices and explicitly construct formulae for the eigenvectors. Section \ref{sect:conclusion} studies some properties of the Segre transformation and concludes with an open problem.

\section{The transformation matrix for the Segre product}\label{sect:Trafo}
Throughout this section, --~without stating this explicitly~-- we assume that $\fa=(a_n)_{n\geq 0}$ and $\fb=(b_n)_{n\geq 0}$ are sequences in $\mathbb{C}$ such that their generating series are of the form
\begin{equation}
\label{gfun:a}
\fa(t)=\sum_{n\geq 0}a_n t^n =\frac{h_0(\fa)+\cdots + h_{d_{\fa}-1}(\fa)t^{d_{\fa}-1}}{(1-t)^{d_{\fa}}},
\end{equation}
and
\begin{equation*}
\fb(t)=\sum_{n\geq 0}b_n t^n =\frac{h_0(\fb)+\cdots + h_{d_{\fb}-1}(\fb)t^{d_{\fb}-1}}{(1-t)^{d_{\fb}}}.
\end{equation*}
In particular, $n\mapsto a_n$ and $n\mapsto b_n$ are polynomial functions in $n$ of degree less than $d_{\fa}$ and $d_{\fb}$, respectively (see \cite[Theorem 4.1.1]{Stanley-EC}).
For technical reasons, we will use the convention that $a_n=b_n=0$ for $n< 0$ and $h_i(\fa)=h_i(\fb)=0$ for $i<0$ or $i\geq d_{\fa}$. 
Our aim is to derive transformation matrices that describe the effect of the Segre product on the $h$-polynomial. The matrices, we are interested in, 
are the following. 
 For a non-negative integer $t$, we set 
$$
M_{d_{\fa}}(d_{\fb},t):= \left( \binom{d_{\fa}+i+t-j-1}{i+t} \binom{d_{\fb}-i-t+j-1}{j} \right)_{0 \le i , j \le d_{\fa}-1}, 
$$
where we use the following extended definition of the binomial coefficient $\binom{n}{k}$ if $n \in \mathbb{C}$ and $k \in \mathbb{Z}$:
$$
\binom{n}{k} := 
\begin{cases}
\frac{n(n-1) \cdots  (n-k+1)}{k!}, & \quad \text{if} \quad k \ge 0 \\
0, & \quad \text{otherwise}.
\end{cases}
$$
Before stating the $h$-vector transformation for the Segre product of two sequences explicitly, we need to introduce one further notation. 
For sequences $\fa=(a_n)_{n \in \mathbb{Z}}$ and $d,n \in \mathbb{Z}$, $d$ positive, we define 
$\rev_{n,d}(\fa):=(a_n,a_{n-1},a_{n-2},\ldots,a_{n-d+1})^T$ for the ``(partial) reverse'' of the sequence $\fa$, where $v^T$ denotes the transposed vector of a given vector $v$. Given these definitions, we can state the main result of this section. 

\begin{theo}\label{thm:trafoMain}
Let $\fa=(a_n)_{n\geq 0}$ and $\fb=(b_n)_{n\geq 0}$ be sequences in $\mathbb{C}$ such that their generating series are of the form
\begin{equation*}
\fa(t)=\sum_{n\geq 0}a_n t^n =\frac{h_0(\fa)+\cdots + h_{d_{\fa}-1}(\fa)t^{d_{\fa}-1}}{(1-t)^{d_{\fa}}},
\end{equation*}
and
\begin{equation*}
\fb(t)=\sum_{n\geq 0}b_n t^n =\frac{h_0(\fb)+\cdots + h_{d_{\fb}-1}(\fb)t^{d_{\fb}-1}}{(1-t)^{d_{\fb}}}.
\end{equation*}
If $d_{\fa}\leq d_{\fb}$, then 
\begin{equation*}
h_n(\fa\ast\fb)=\begin{cases}
\rev_{n,d_{\fa}}(\fh(\fa))^T \cdot M_{d_{\fa}}(d_{\fa},0) \cdot \rev_{n,d_{\fa}}(\fh(\fb)), &\mbox{ if } 0 \le n \le d_{\fa}-1\\
\rev_{d_{\fa}-1,d_{\fa}}(\fh(\fa))^T \cdot M_{d_{\fa}}(d_{\fa},n-d_{\fa}+1) \cdot \rev_{n,d_{\fa}}(\fh(\fb)), &\mbox{ if }  d_{\fa} \le n \le d_{\fb}-1\\
\rev_{n-d_{\fb}+d_{\fa},d_{\fa}}(\fh(\fa))^T \cdot M_{d_{\fa}}(d_{\fb},d_{\fb}-d_{\fa}) \cdot\rev_{n,d_{\fa}}(\fh(\fb)), &\mbox{ if }  d_{\fb} \le n \le d_{\fa}+d_{\fb}-1.\\
\end{cases}
\end{equation*}
\end{theo}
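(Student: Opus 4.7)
\medskip
\textbf{Proof plan.} Our starting point is the defining identity $\fh(\fa\ast\fb)(t) = (1-t)^{d_\fa+d_\fb-1}(\fa\ast\fb)(t)$. Comparing the coefficient of $t^n$ on both sides and using the convention $a_m=b_m=0$ for $m<0$ to truncate the range of summation yields the inversion
\[
h_n(\fa\ast\fb) \;=\; \sum_{k=0}^{n}(-1)^k\binom{d_\fa+d_\fb-1}{k}\,a_{n-k}\,b_{n-k}.
\]
Substituting the standard expansions
\[
a_m = \sum_{i=0}^{d_\fa-1} h_i(\fa)\binom{m-i+d_\fa-1}{d_\fa-1},\qquad b_m = \sum_{j=0}^{d_\fb-1} h_j(\fb)\binom{m-j+d_\fb-1}{d_\fb-1}
\]
(both valid for $m\ge 0$) and interchanging the order of summation reduces the theorem to evaluating the inner kernel
\[
C_{i,j}(n) \;=\; \sum_{k=0}^{n}(-1)^k\binom{d_\fa+d_\fb-1}{k}\binom{n-k-i+d_\fa-1}{d_\fa-1}\binom{n-k-j+d_\fb-1}{d_\fb-1},
\]
and, after the reindexing $i\mapsto n-i$, $j\mapsto n-j$ that absorbs the $\rev$-operation, matching it to the corresponding entry of $M_{d_\fa}(\cdot,t)$.

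\medskip
The heart of the argument is the combinatorial evaluation of $C_{i,j}(n)$. Viewed as a function of $k$, the product of the two shifted binomials is a polynomial of degree exactly $d_\fa+d_\fb-2$, one less than the order $D:=d_\fa+d_\fb-1$ of the finite-difference operator being applied. Consequently the unrestricted alternating sum $\sum_{k\ge 0}$ vanishes identically, and $C_{i,j}(n)$ equals, up to sign, the complementary boundary tail $\sum_{k>n}$, in which $n-k<0$ pushes the binomials into their extended-binomial regime. This tail can be computed cleanly by first applying a Chu--Vandermonde convolution to collapse the product of two binomials in $k$ into a single sum of binomials in $k$, and then invoking the classical identity $\sum_{k\ge 0}(-1)^k\binom{D}{k}\binom{X-k}{r}=\binom{X-D}{r-D}$. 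After simplification, $C_{i,j}(n)$ should collapse to a single product of two binomials of the exact shape defining an entry of $M_{d_\fa}(\cdot,t)$.

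\medskip
The three cases of the theorem correspond to different regimes of $n$ relative to the boundaries of the two $h$-vectors. For $0\le n\le d_\fa-1$, the reverse $\rev_{n,d_\fa}(\fh(\fa))$ picks up trailing zeros (since $h_i(\fa)=0$ for $i<0$), and in this regime the effective second parameter of the matrix is $d_\fa$ with $t=0$. For $d_\fb\le n\le d_\fa+d_\fb-1$, the reverse $\rev_{n,d_\fa}(\fh(\fb))$ picks up leading zeros (since $h_i(\fb)=0$ for $i\ge d_\fb$) while the $\fa$-side is shifted to start at $h_{n-d_\fb+d_\fa}(\fa)$, producing $t=d_\fb-d_\fa$. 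The middle range $d_\fa\le n\le d_\fb-1$ interpolates between these two behaviours with the linearly varying shift $t=n-d_\fa+1$. These three values of $t$ can be read off by tracking how the effective ranges of nonzero $i$ and $j$ interact with the extended-binomial conventions at the boundaries.

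\medskip
The main obstacle will be the evaluation of $C_{i,j}(n)$ itself. Because the sum sits exactly at the threshold where the finite-difference operator of order $D$ just barely fails to annihilate its polynomial summand of degree $D-1$, the nonzero result emerges entirely from the boundary truncation, and extracting a clean expression of the required matrix-entry shape requires a careful combination of Chu--Vandermonde with attentive bookkeeping of the extended binomial coefficients at negative upper arguments. Once this identity is in place, the three-case verification reduces to routine reindexing.
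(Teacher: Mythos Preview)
Your plan is structurally identical to the paper's proof: invert via $\Delta^{d_\fa+d_\fb-1}$, expand $a_m,b_m$ through their $h$-vectors, isolate the kernel $C_{i,j}(n)$, collapse it to a product of two binomials, and then read off the three $n$-ranges as square blocks of the resulting bilinear form. The paper packages the last step slightly differently---it first records the bilinear form with a single rectangular matrix $M'_{d_\fa,d_\fb}$ and then observes that the $d_\fa\times d_\fa$ blocks $M_{d_\fa}(d_\fb,t)$ are exactly what survive in each range---but this is equivalent to your direct reindexing.

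The one substantive divergence is in how the kernel is evaluated. The paper proves the closed form for $C_{i,j}(n)$ (its Lemma~\ref{lem:id}) by a direct appeal to the triple binomial identity (5.28) from Graham--Knuth--Patashnik after two sign flips. Your proposed route is different: observe that the summand is a polynomial in $k$ of degree $D-1$, whence the full alternating sum $\sum_{k=0}^{D}$ vanishes, and recover $C_{i,j}(n)$ as (minus) the tail $\sum_{k>n}$. This observation is correct, but be aware that it does not in itself simplify anything: the tail is a priori no easier to evaluate than the original truncated sum, and the ``classical identity'' $\sum_{k\ge 0}(-1)^k\binom{D}{k}\binom{X-k}{r}=\binom{X-D}{r-D}$ you quote is again a full-range identity, so applying it after a Chu--Vandermonde expansion merely reconfirms that the full sum is zero. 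To make your route work you would have to compute the tail directly (only finitely many terms, but no obvious cancellation), or else find a different convolution that respects the truncation at $k=n$. The paper sidesteps this entirely by invoking a ready-made identity that already has the correct range built in. Either way you arrive at the same closed form; the paper's path is just shorter.
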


For the proof of the above theorem we will need some preparation; the result will follow at the end of this section. 

We begin by deriving a transformation rule between the terms of the sequence $\fa$ and the coefficients of the $h$-polynomial. 
It involves the \emph{backward difference operator} $\Delta_n$, defined as $\Delta_n a_n := a_n - a_{n-1}$, and uses the fact that this operator is 
invertible on sequences $(a_n)_{n \in \mathbb{Z}}$ with $a_n=0$ if $n < 0$. Indeed, $\Delta^{-1}_n = \sum_{i=0}^{\infty} E_n^{-i}$, where $E_n$ 
denotes the \emph{shift operator}, defined as $E_n a_n := a_{n+1}$. In order to see this, it is crucial that the sum is in fact finite if applied to sequences that vanish on the negative integers. 

\begin{prop} 
\label{transform:ph}
Let $\fa=(a_n)_{n \ge 0}$ be a sequence of complex numbers whose generating function can be written as \eqref{gfun:a} and set $a_n=0$ if $n<0$. Then, for $n \in \mathbb{Z}$,
$$h_n(\fa) = \Delta_n^{d_{\fa}} a_n = \sum_{j=0}^n \binom{n-j-d_{\fa}-1}{n-j} a_j \quad 
 \text{ and }  \quad 
a_n= \Delta_n^{-d_{\fa}} h_n(\fa) = \sum_{j=0}^n \binom{n-j+d_{\fa}-1}{n-j} h_j(\fa).$$
\end{prop}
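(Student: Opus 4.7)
The cleanest route is to work with generating functions and then translate the results back into operator/combinatorial form.

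My plan is first to establish the explicit binomial formulas by manipulating the generating series, and then read off the operator statements as consequences.

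For the second identity, I would start from the given representation
\[
\fa(t) = \frac{\fh(\fa)(t)}{(1-t)^{d_{\fa}}} = \fh(\fa)(t) \cdot \sum_{k \geq 0}\binom{k+d_{\fa}-1}{k} t^k,
\]
where the expansion of $(1-t)^{-d_{\fa}}$ is the standard negative binomial series. Reading off the coefficient of $t^n$ by a Cauchy product immediately yields
\[
a_n = \sum_{j=0}^{n}\binom{n-j+d_{\fa}-1}{n-j} h_j(\fa),
\]
which is the second displayed formula.

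For the first identity, I would multiply through by $(1-t)^{d_{\fa}}$ and use the finite binomial expansion $(1-t)^{d_{\fa}} = \sum_{k \geq 0} (-1)^k \binom{d_{\fa}}{k} t^k$. Comparing coefficients of $t^n$ on the two sides of $(1-t)^{d_{\fa}} \fa(t) = \fh(\fa)(t)$ gives $h_n(\fa) = \sum_{k \geq 0} (-1)^k \binom{d_{\fa}}{k} a_{n-k}$. To match this with the stated expression, I would verify the (purely algebraic) negation identity
\[
\binom{k-d_{\fa}-1}{k} = (-1)^k \binom{d_{\fa}}{k},
\]
which, under the extended convention, also forces the summand to vanish for $k > d_{\fa}$, so that the sum is actually finite (this is the only spot where one has to be a little careful with the extended binomial convention, but it is routine).

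Finally, I would translate these into operator language. Since $\Delta_n = 1 - E_n^{-1}$, we have $\Delta_n^{d_{\fa}} = \sum_{k=0}^{d_{\fa}} (-1)^k \binom{d_{\fa}}{k} E_n^{-k}$, so $\Delta_n^{d_{\fa}} a_n$ agrees termwise with the sum above, proving $h_n(\fa) = \Delta_n^{d_{\fa}} a_n$. Conversely, the formal identity $(1-E_n^{-1})^{-1} = \sum_{i \geq 0} E_n^{-i}$ makes sense as an operator on sequences vanishing on the negative integers, because only finitely many terms contribute at each index; iterating this $d_{\fa}$ times and collecting coefficients via the negative binomial expansion gives exactly $\sum_{j=0}^{n} \binom{n-j+d_{\fa}-1}{n-j} h_j(\fa)$, matching the second formula and yielding $a_n = \Delta_n^{-d_{\fa}} h_n(\fa)$. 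The only genuinely delicate point in the whole argument is the justification that $\Delta_n^{-1}$ is well-defined on sequences supported on the non-negative integers, which is immediate from the finiteness observation.
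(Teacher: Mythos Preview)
Your argument is correct and is essentially the paper's own proof, only with the order of presentation reversed. The paper first shows the operator identity $h_n(\fa)=\Delta_n^{d_{\fa}}a_n$ by iteratively observing that multiplying a generating series by $(1-t)$ applies $\Delta_n$ to its coefficient sequence, and then reads off the explicit binomial sums from a separate lemma computing $\Delta_n^{d}a_n=\sum_{j=0}^{n}\binom{n-j-d-1}{n-j}a_j$ for all $d\in\mathbb{Z}$; you instead obtain the explicit sums directly via the Cauchy product and only afterwards identify them with the operator expressions. The mathematical content---the binomial expansion of $(1-t)^{\pm d_{\fa}}$, the negation identity $\binom{k-d_{\fa}-1}{k}=(-1)^k\binom{d_{\fa}}{k}$, and the finiteness observation that makes $\Delta_n^{-1}$ well defined on sequences supported on $\mathbb{Z}_{\ge 0}$---is identical in both.
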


The following lemma is needed in the proof of the above proposition.

\begin{lem}
\label{lem:diff}
 Let $(a_n)_{n \in \mathbb{Z}}$ be a sequence in $\mathbb{C}$ with 
$a_{n}=0$ if $n<0$. Then, for all $d \in \mathbb{Z}$, 
$$
\Delta_n^d a_n = \sum_{j=0}^n \binom{n-j-d-1}{n-j} a_j.
$$
\end{lem}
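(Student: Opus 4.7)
The plan is to factor the backward difference operator as $\Delta_n = 1 - E_n^{-1}$ and then expand $\Delta_n^d$ by the generalized binomial theorem. In the formal power series ring $\mathbb{C}[[x]]$, the element $1-x$ is a unit, so $(1-x)^d$ makes sense for every integer $d$ and one has
$$(1-x)^d = \sum_{k \geq 0} \binom{d}{k}(-1)^k x^k.$$
Substituting $x = E_n^{-1}$ gives the operator identity $\Delta_n^d = \sum_{k \geq 0} \binom{d}{k}(-1)^k E_n^{-k}$. For $d \geq 0$ this is a finite sum; for $d < 0$ it is a formal series, but when applied to a sequence vanishing on the negative integers it collapses to a finite sum, since $E_n^{-k} a_n = a_{n-k} = 0$ once $k > n$.

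Applying the expanded operator to $a_n$ and reindexing by $j = n-k$ yields
$$\Delta_n^d a_n = \sum_{j=0}^{n} \binom{d}{n-j}(-1)^{n-j} a_j.$$
It remains to invoke the upper-negation identity $(-1)^k \binom{d}{k} = \binom{k-d-1}{k}$, which holds for every $k \geq 0$ and any $d$ under the paper's extended convention (it is just a rewriting of the falling factorial with the signs absorbed). Setting $k = n-j$ converts the previous display into $\sum_{j=0}^{n} \binom{n-j-d-1}{n-j} a_j$, which is the claim.

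The main (and only) point needing care is that for negative $d$ the formal series $\sum_{k \geq 0} \binom{d}{k}(-1)^k E_n^{-k}$ really coincides with the operator defined in the excerpt via $\Delta_n^{-1} = \sum_{i \geq 0} E_n^{-i}$; this reduces to the formal identity $(1-x)^{d_1}(1-x)^{d_2} = (1-x)^{d_1+d_2}$ in $\mathbb{C}[[x]]$, applied with $d_1 = d$ and $d_2 = -d$. A fully elementary alternative that sidesteps this formal-series point is induction on $d$: the base case $d = 0$ collapses the right-hand side to $a_n$ because $\binom{-1}{0}=1$ while $\binom{m}{m+1}=0$ for $m \geq 0$; the forward step $d \to d+1$ is Pascal's rule $\binom{m+1}{k}=\binom{m}{k}+\binom{m}{k-1}$ applied after expanding $\Delta_n$; and the backward step $d \to d-1$ uses that $\Delta_n$ is injective on sequences vanishing for $n < 0$, so showing that $\Delta_n$ applied to the proposed formula at level $d-1$ recovers the (inductively known) formula at level $d$ forces equality at level $d-1$ as well.
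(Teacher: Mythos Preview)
Your proof is correct and follows essentially the same route as the paper: write $\Delta_n = \id - E_n^{-1}$, expand $(\id - E_n^{-1})^d$ by the (generalized) binomial theorem, truncate to a finite sum using $a_{n-k}=0$ for $k>n$, reindex, and finish with the upper-negation identity $(-1)^{n-j}\binom{d}{n-j}=\binom{n-j-d-1}{n-j}$. Your additional remarks---checking that for negative $d$ the formal expansion agrees with the inverse defined via $\Delta_n^{-1}=\sum_{i\ge 0}E_n^{-i}$, and the alternative induction on $d$---go slightly beyond what the paper writes but do not change the approach.
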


\begin{proof}[Proof of Lemma~\ref{lem:diff}] We use the binomial theorem to see that 
$$
\Delta^d_n a_n  = (\id - E_n^{-1})^{d} a_n = \sum_{i=0}^{\infty} (-1)^{i} \binom{d}{i} a_{n-i} = \sum_{i=0}^{n} (-1)^i \binom{d}{i} a_{n-i} = \sum_{j=0}^n (-1)^{n+j} \binom{d}{n-j} a_j.
$$
The assertion follows from $\binom{d}{n-j}= (-1)^{n+j} \binom{n-j-d-1}{n-j}$.
\end{proof}

\begin{proof}[Proof of Proposition~\ref{transform:ph}]
\begin{multline*}
h_0(\fa) + h_1(\fa) t + \dots + h_{d_{\fa}-1}(\fa) t^{d-1} = (1-t)^{d_{\fa}} \sum_{n \in \mathbb{Z}} a_n t^n \\
= (1-t)^{d_{\fa}-1} \left( \sum_{n \in \mathbb{Z}} a_n t^n - \sum_{n \in \mathbb{Z}} a_n t^{n+1} \right)
= (1-t)^{d_{\fa}-1} \sum_{n \in \mathbb{Z}} (\Delta_n a_n) t^n = \cdots = 
\sum_{n \in \mathbb{Z}} (\Delta_n^{d_{\fa}} a_n) t^n
\end{multline*}
This implies $h_n(\fa) = \Delta_n^{d_{\fa}} a_n$. Since $\Delta_n$ is invertible on sequences $(a_n)_{n \in \mathbb{Z}}$ with 
$a_n=0$, if $n<0$, we have $a_n= \Delta_n^{-d_{\fa}} h_n(\fa)$. 
The rest follows from Lemma~\ref{lem:diff}.
\end{proof}

According to Proposition~\ref{transform:ph} the $h$-vector entries of the Segre product $\fa\ast\fb$ can be computed in the following way:
\begin{align}
h_n(\fa \ast \fb) &= \Delta_n^{d_{\fa}+d_{\fb}-1} (a_n \cdot b_n) = \Delta_n^{d_{\fa}+d_{\fb}-1} (\Delta_n^{-d_{\fa}} h_n(\fa) \cdot \Delta_n^{-d_{\fb}} h_n(\fb)) \notag\\
&= \sum_{i,j=0}^{n} h_i(\fa) h_j(\fb) \sum_{k=0}^{n} \binom{n-k-d_{\fa}-d_{\fb}}{n-k} \binom{k-i+d_{\fa}-1}{k-i} \binom{k-j+d_{\fb}-1}{k-j} \notag\\
&= \sum_{i=0}^{\min(n,d_{\fa}-1)} \sum_{j=0}^{\min(n,d_{\fb}-1)} h_i(\fa) h_j(\fb) \sum_{k=0}^{n} \binom{n-k-d_{\fa}-d_{\fb}}{n-k} \binom{k-i+d_{\fa}-1}{k-i} \binom{k-j+d_{\fb}-1}{k-j}.\label{eq:h}
\end{align}

The next lemma shows that the inner sum in the last expression can be simplified.

\begin{lem}\label{lem:id} Let $d_{\fa}, d_{\fb},i, j$ be positive integers with 
$0 \le i < d_{\fa}$ and $0 \le j < d_{\fb}$. Then we have
\begin{equation}\label{eq1}
\sum_{k=0}^{n} \binom{n-k-d_{\fa}-d_{\fb}}{n-k} \binom{k-i+d_{\fa}-1}{k-i} \binom{k-j+d_{\fb}-1}{k-j} = \binom{d_{\fa}+j-i-1}{n-i} \binom{d_{\fb}-j+i-1}{n-j}.
\end{equation}
\end{lem}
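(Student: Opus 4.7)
The plan is to recognize the left-hand side of \eqref{eq1} as a coefficient extraction from a generating function and to compute that coefficient via a contour-integral evaluation of a Hadamard product. Using the upper-negation identity $\binom{n-k-d_{\fa}-d_{\fb}}{n-k} = [t^{n-k}](1-t)^{d_{\fa}+d_{\fb}-1}$ together with the expansions $\frac{t^i}{(1-t)^{d_{\fa}}} = \sum_{k\ge 0}\binom{k-i+d_{\fa}-1}{k-i}t^k$ and $\frac{t^j}{(1-t)^{d_{\fb}}} = \sum_{k\ge 0}\binom{k-j+d_{\fb}-1}{k-j}t^k$, I would first recast the left-hand side as $[t^n](1-t)^{d_{\fa}+d_{\fb}-1}(A_i \odot B_j)(t)$, where $A_i(t) := \frac{t^i}{(1-t)^{d_{\fa}}}$, $B_j(t) := \frac{t^j}{(1-t)^{d_{\fb}}}$, and $\odot$ denotes the Hadamard (coefficient-wise) product.

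Next I would compute $(A_i \odot B_j)(t)$ using the classical contour integral representation $(f \odot g)(t) = \frac{1}{2\pi i}\oint_{|z|=r} f(z) g(t/z) \frac{dz}{z}$ for a circle with $|t| < r < 1$. Substituting $A_i, B_j$ yields the integrand $\frac{t^j z^{i+d_{\fb}-j-1}}{(1-z)^{d_{\fa}}(z-t)^{d_{\fb}}}$, and the hypotheses $0 \le i < d_{\fa}$ and $0 \le j < d_{\fb}$ guarantee regularity at $z=0$, so the only singularity inside the contour is the order-$d_{\fb}$ pole at $z = t$. The crucial simplification is the substitution $z = t + (1-t)u$, under which $z - t = (1-t)u$ and $1 - z = (1-t)(1-u)$; the residue at $z = t$ then collapses to
\[
(A_i \odot B_j)(t) = \frac{t^j}{(1-t)^{d_{\fa}+d_{\fb}-1}}\,[u^{d_{\fb}-1}]\,\frac{(t(1-u)+u)^{i+d_{\fb}-j-1}}{(1-u)^{d_{\fa}}}.
\]

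Multiplying by $(1-t)^{d_{\fa}+d_{\fb}-1}$ and extracting $[t^n]$ reduces the task to computing $[t^{n-j}][u^{d_{\fb}-1}]\frac{(t(1-u)+u)^{i+d_{\fb}-j-1}}{(1-u)^{d_{\fa}}}$. Expanding $(t(1-u)+u)^{i+d_{\fb}-j-1}$ by the binomial theorem and extracting $[t^{n-j}]$ selects the single term with exponent $s = n-j$, leaving $\binom{i+d_{\fb}-j-1}{n-j}(1-u)^{n-j-d_{\fa}}u^{i+d_{\fb}-n-1}$; the $[u^{d_{\fb}-1}]$ coefficient of this equals $(-1)^{n-i}\binom{n-j-d_{\fa}}{n-i}$, which by upper negation equals $\binom{d_{\fa}+j-i-1}{n-i}$. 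Combined with the first binomial, this is exactly the right-hand side of \eqref{eq1}. The main obstacle is pinpointing the substitution $z = t+(1-t)u$: without it, the $(d_{\fb}-1)$st derivative appearing in the residue does not combine cleanly with the factor $(1-t)^{d_{\fa}+d_{\fb}-1}$, and the final coefficient does not collapse into a product of two binomials.
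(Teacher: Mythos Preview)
Your argument is correct and takes a genuinely different route from the paper. The paper's proof is a direct binomial-coefficient manipulation: after the substitution $l=n-k$ and a couple of applications of upper negation and symmetry, it matches the sum to the known triple-binomial identity (5.28) in Graham--Knuth--Patashnik, $\sum_{l}\binom{m-r+s}{l}\binom{t+r-s}{t-l}\binom{r+l}{m+t}=\binom{r}{m}\binom{s}{t}$, with a specific choice of parameters. Your proof instead interprets the sum structurally as $[t^n](1-t)^{d_{\fa}+d_{\fb}-1}(A_i\odot B_j)(t)$, computes the Hadamard product by a residue, and discovers the factorization via the affine change $z=t+(1-t)u$. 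The paper's approach is shorter and purely combinatorial, but it relies on recognizing an identity from the literature; your approach is self-contained, explains \emph{why} the answer is a product of two binomials (the substitution decouples the $(1-z)^{d_{\fa}}$ and $(z-t)^{d_{\fb}}$ singularities into $(1-u)^{d_{\fa}}$ and $u^{d_{\fb}}$), and fits naturally with the Hadamard-product theme of the paper. One small point worth making explicit in a write-up: when $n>i+d_{\fb}-1$ or $n<\max(i,j)$ the coefficient extraction yields $0$, and one should note that the right-hand side vanishes for the same reason; this is immediate from the constraints $0\le i<d_{\fa}$, $0\le j<d_{\fb}$.
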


\begin{proof}
Substituting $l= n-k$ in the left-hand side of \eqref{eq1}, we obtain
\begin{multline*}
\sum_{l=0}^{n}\binom{l-d_{\fa}-d_{\fb}}{l}\binom{n-l-i+d_{\fa}-1}{n-l-i}\binom{n-l-j+d_{\fb}-1}{n-l-j}\\
=\sum_{l=0}^{n}(-1)^l\binom{d_{\fa}+d_{\fb}-1}{l}(-1)^{n-l-i}\binom{-d_{\fa}}{n-l-i}\binom{n-l-j+d_{\fb}-1}{d_{\fb}-1}.
\end{multline*}
For the last equality, we have applied the identity $\binom{n}{k}=(-1)^k\binom{k-n-1}{k}$ and the symmetry $\binom{n}{k}=\binom{n}{n-k}$ 
to the first two and the last binomial coefficient of the product inside the sum, respectively. (Note that with the extended definition of the  
binomial coefficient, the latter identity is applicable if $n$ is a non-negative integer.) 
We may extend the range of summation to all non-negative 
integers $l$ since $\binom{-d_{\fa}}{n-l-i}=0$ if $l>n$. 
Using the first identity once more for the last binomial coefficient it follows that the left-hand side of \eqref{eq1} equals
\begin{equation}\label{eq3}
(-1)^{n+i+d_{\fb}+1}\sum_{l=0}^{\infty}\binom{d_{\fa}+d_{\fb}-1}{l}\binom{-d_{\fa}}{n-l-i}\binom{-n+l+j-1}{d_{\fb}-1}.
\end{equation}
In order to show the claim we will now apply the following triple binomial identity from \cite[p.171, (5.28)]{GrahamKnuth}:
\begin{equation}\label{eqId}
\sum_{l=0}^{\infty} \binom{m-r+s}{l}\binom{t+r-s}{t-l}\binom{r+l}{m+t}=\binom{r}{m}\binom{s}{t},
\end{equation}
where $m$, $t\in \mathbb{Z}$, to the expression in \eqref{eq3}.

If we set $m=d_{\fb}-1+i-n$, $r=j-n-1$, $s=d_{\fa}+j-i-1$, $t=n-i$, we infer from \eqref{eqId} that \eqref{eq3} can  be written as
$$
(-1)^{n+i+d_{\fb}+1}\binom{j-n-1}{d_{\fb}-1+i-n}\binom{d_{\fa}+j-i-1}{n-i}= 
\binom{d_{\fa}+j-i-1}{n-i}
=\binom{d_{\fa}+j-i-1}{n-i}\binom{d_{\fb}-j+i-1}{n-j}.
$$
For the last equality, we have once again applied the two standard binomial identities from above.
\end{proof}

Substituting the identity from Lemma \ref{lem:id} in \eqref{eq:h}, we can infer the following simplified expression of the $n$\textsuperscript{th} entry of the $h$-vector of $\fa\ast\fb$:
\begin{equation}\label{eq:bounds}
h_n(\fa \ast \fb) = \sum_{i=\max(n-d_{\fa}+1,0)}^{n} \sum_{j=\max(n-d_{\fb}+1,0)}^n h_{n-i}(\fa) h_{n-j}(\fb) \binom{d_{\fa}+i-j-1}{i} \binom{d_{\fb}+j-i-1}{j}.
\end{equation}
In the following, we assume  without loss of generality that $d_{\fa} \le d_{\fb}$. 
On the one hand,  we have
$$d_{\fa} + i - j - 1 \ge d_{\fa} + n - d_{\fa} +1 - n -1 = 0 \mbox{ and hence } \binom{d_{\fa}+i-j-1}{i}=0, \text{ if } d_{\fa}-1<j.$$
On the other hand,
$$d_{\fb}+j-i-1 \ge d_{\fb} + n- d_{\fb} + 1 - n -1 = 0 \mbox{ and hence } \binom{d_{\fb}+j-i-1}{j}=0, \text{ if } d_{\fb}-1 < i.$$  
We can thus change the bounds of summation in \eqref{eq:bounds} and obtain
\begin{equation}
\label{eq:reduced}
h(\fa \ast \fb)_n = \sum_{i=\max(n-d_{\fa}+1,0)}^{\min(n,d_{\fb}-1)} \sum_{j=\max(n-d_{\fb}+1,0)}^{\min(n,d_{\fa}-1)}  h_{n-i}(\fa) h_{n-j}(\fb) \binom{d_{\fa}+i-j-1}{i} \binom{d_{\fb}+j-i-1}{j}.
\end{equation}
Since $h_{n-i}(\fa)=0$, if $i <n-d_{\fa}+1$ or $i >n$, and $h_{n-j}(\fb)=0$, if $j < n-d_{\fb}+1$ or $j > n$, \eqref{eq:reduced} further simplifies to 
\begin{equation}\label{eq:newBounds}
h(\fa \ast \fb)_n = \sum_{i=0}^{d_{\fb}-1} \sum_{j=0}^{d_{\fa}-1}  h_{n-i}(\fa) h_{n-j}(\fb) \binom{d_{\fa}+i-j-1}{i} \binom{d_{\fb}+j-i-1}{j}.
\end{equation}
Using the notation $\rev$ for the (partial) reverse of a vector (see the paragraph preceding Theorem \ref{thm:trafoMain}), we can reformulate the last equation. 
More precisely,  
\begin{equation}\label{eq:trafo}
h_n(\fa \ast \fb) = \rev_{n,d_{\fb}}(\fh(\fa))^T \cdot M'_{d_{\fa},d_{\fb}} \cdot \rev_{n,d_{\fa}}(\fh(\fb)),
\end{equation}
where 
$M'_{d_{\fa},d_{\fb}}:=\left( \binom{d_{\fa}+i-j-1}{i} \binom{d_{\fb}-i+j-1}{j} \right)_{0 \le i \le d_{\fb}-1, 0 \le j \le d_{\fa}-1}=(m_{i,j})_{0 \le i \le d_{\fb}-1, 0 \le j \le d_{\fa}-1}$.

\begin{example}\label{ex:trafo} If $d_{\fa}=3$ and $d_{\fb}=4$, we obtain the following matrix.
$$
M'_{3,4}=
\left(
\begin{array}{ccc}
 1 & 4 & 10 \\
 3 & 6 & 6 \\
 6 & 6 & 3 \\
 10 & 4 & 1
\end{array}
\right)
$$
\end{example}

%Note that the matrix in the above example has a very specific structure. Indeed, if we build the sequence consisting of the entries of the matrix in the first two rows read from left to right and top to bottom, we get the same sequence as if we write the entries of the last two rows read from right to left and bottom to top consecutively. This phenomenon is true in general.
%Using the notation preceding the above example, we obtain:

Note that the matrix in the above example is invariant under rotation of $180^{\circ}$. This phenomenon is true in general and will be useful below. Using the notation preceding the above example, we obtain:

\begin{cor}\label{cor:sym}
Let $d_{\fa}$ and $d_{\fb}$ be positive integers and $M'_{d_{\fa},d_{\fb}}=(m_{i,j})_{0 \le i \le d_{\fb}-1, 0 \le j \le d_{\fa}-1}$. Then
\begin{equation*}
m_{i,j}=m_{d_{\fb}-1-i,d_{\fa}-1-j}
\end{equation*}
for $0\leq i\leq d_{\fb}-1$ and $0\leq j\leq d_{\fa}-1$.
\end{cor}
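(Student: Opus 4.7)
The plan is to verify the claimed identity $m_{i,j}=m_{d_{\fb}-1-i,d_{\fa}-1-j}$ directly from the explicit formula
$$m_{i,j} = \binom{d_{\fa}+i-j-1}{i} \binom{d_{\fb}-i+j-1}{j},$$
by substituting $i \mapsto d_{\fb}-1-i$ and $j \mapsto d_{\fa}-1-j$ and then applying the symmetry $\binom{n}{k}=\binom{n}{n-k}$ (valid for non-negative integers $n$) to each factor.

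First, I would compute the arguments of the two binomial coefficients in $m_{d_{\fb}-1-i,d_{\fa}-1-j}$. A short simplification shows that the ``top'' of the first binomial becomes $d_{\fa}+(d_{\fb}-1-i)-(d_{\fa}-1-j)-1 = d_{\fb}+j-i-1$ and the ``bottom'' becomes $d_{\fb}-1-i$; for the second binomial the top simplifies to $d_{\fa}+i-j-1$ and the bottom to $d_{\fa}-1-j$. Thus
$$m_{d_{\fb}-1-i,d_{\fa}-1-j} = \binom{d_{\fb}+j-i-1}{d_{\fb}-1-i} \binom{d_{\fa}+i-j-1}{d_{\fa}-1-j}.$$

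Next, I would observe that for $0 \le i \le d_{\fb}-1$ and $0 \le j \le d_{\fa}-1$, both top entries $d_{\fa}+i-j-1$ and $d_{\fb}-i+j-1$ are non-negative integers (a one-line check using the assumed bounds on $i$ and $j$), so the classical symmetry $\binom{n}{k}=\binom{n}{n-k}$ is applicable in the extended sense of binomial coefficient defined in the paper. Applying it once to each of the two factors in $m_{i,j}$ gives
$$\binom{d_{\fa}+i-j-1}{i} = \binom{d_{\fa}+i-j-1}{d_{\fa}-1-j}, \qquad \binom{d_{\fb}-i+j-1}{j} = \binom{d_{\fb}-i+j-1}{d_{\fb}-1-i},$$
and the right-hand side coincides exactly with the expression for $m_{d_{\fb}-1-i,d_{\fa}-1-j}$ obtained above.

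There is no real obstacle here; the statement is essentially a symmetry bookkeeping exercise. The only point one should not skip is the verification that the top entries of the binomial coefficients remain non-negative throughout the index range, since the extended binomial convention used in the paper treats negative lower indices specially and the symmetry $\binom{n}{k}=\binom{n}{n-k}$ is only invoked here for non-negative $n$.
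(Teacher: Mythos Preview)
Your proof is correct and is exactly the routine check the paper alludes to: substitute the rotated indices into the explicit formula for $m_{i,j}$ and apply $\binom{n}{k}=\binom{n}{n-k}$ to each factor, after verifying the tops $d_{\fa}+i-j-1$ and $d_{\fb}-i+j-1$ are non-negative. The paper simply states that the verification is routine without writing it out, so your approach coincides with the paper's.
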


\begin{proof}
It is routine to check that the stated identity holds.
%[Ich wuerde den Beweis auslassen und nur ``it is routine to check''
%schreiben.]
%The claim holds since
%\begin{align*}
%m_{d+1-i,c+1-j}&= \binom{c+(d+1-i)-(c+1-j)-1}{d+1-i} \binom{d-(d+1-i)+(c+1-j)-1}{c+1-j}\\
%&=\binom{d-i+j-1}{d+1-i}\binom{c+i-j-1}{c+1-j}\\
%&=\binom{c+i-j-1}{i}\binom{d-i+j-1}{j}=m_{i,j}
%\end{align*}
%for $0\leq i\leq d-1$ and $0\leq j\leq c-1$.
\end{proof}

Now, recall the definition of the matrices $M_{d_{\fa}}(d_{\fb},t)$ from the beginning of this section. We defined
$$
M_{d_{\fa}}(d_{\fb},t)= \left( \binom{d_{\fa}+i+t-j-1}{i+t} \binom{d_{\fb}-i-t+j-1}{j} \right)_{0 \le i , j \le d_{\fa}-1}.
$$
Note that these matrices are $d_{\fa} \times d_{\fa}$ block submatrices of $M'_{d_{\fa},d_{\fb}}$. More precisely, 
if $t \in \{0,1,\ldots,d_{\fb}-d_{\fa}\}$, then $M_{d_{\fa}}(d_{\fb},t)$ consists of $(t+1)$\textsuperscript{st} to the  $(t+d_{\fa})$\textsuperscript{th} row of $M'_{d_{\fa},d_{\fb}}$. Indeed, if we continue Example \ref{ex:trafo}, we obtain 
$$
M_{3}(4,0)=
\left(
\begin{array}{ccc}
 1 & 4 & 10 \\
 3 & 6 & 6 \\
 6 & 6 & 3
\end{array}
\right) \qquad \text{and} \qquad 
M_{3}(4,1)= \left(
\begin{array}{ccc}
  3 & 6 & 6 \\
 6 & 6 & 3 \\
 10 & 4 & 1
\end{array}
\right).
$$

We can finally provide the proof of Theorem \ref{thm:trafoMain}.

\begin{proof}[Proof of Theorem \ref{thm:trafoMain}]
Let $d_{\fa}\leq d_{\fb}$. We need to distinguish three cases.

{\it Case 1: $0 \le n \le d_{\fa}-1$.} According to  \eqref{eq:reduced} and since $h_{n-i}(\fa)=0$ and $h_{n-i}(\fb)=0$ if $i>n$, we have 
$$
h(\fa \ast \fb)_n= \sum_{i=0}^{n} \sum_{j=0}^{n} h_{n-i}(\fa) h_{n-j}(\fb) m_{i,j}
= \sum_{i=0}^{d_{\fa}-1} \sum_{j=0}^{d_{\fa}-1} h_{n-i}(\fa) h_{n-j}(\fb) m_{i,j} = 
\rev_{n,d_{\fa}}(\fh(\fa))^T \cdot M_{d_{\fa}}(d_{\fb},0) \cdot \rev_{n,d_{\fa}}(\fh(\fb)).
$$

{\it Case 2: $d_{\fa} \le n \le d_{\fb}-1$.} In this case, \eqref{eq:reduced} implies
\begin{align*}
h_n(\fa \ast \fb)& = \sum_{i=n-d_{\fa}+1}^{n} \sum_{j=0}^{d_{\fa}-1} h_{n-i}(\fa) h_{n-j}(\fb) m_{i,j}  =
\sum_{i=0}^{d_{\fa}-1} \sum_{j=0}^{d_{\fa}-1} h_{d_{\fa}-i-1}(\fa) h_{n-j}(\fb) m_{i+n-d_{\fa}+1,j}  \\
&=\rev_{d_{\fa}-1,d_{\fa}}(\fh(\fa))^T \cdot M_{d_{\fa}}(d_{\fa},n-d_{\fa}+1) \cdot \rev_{n,d_{\fa}}(\fh(\fb)).
\end{align*}

{\it Case 3: $d_{\fb} \le n \le d_{\fa}+d_{\fb}-1$.} By \eqref{eq:reduced} and since $h_{n-i}(\fa)=0$, if $d_{\fb}-d_{\fa} \le i \le n-d_{\fa}$, and 
$h_{n-j}(\fb)=0$, if $0 \le j \le n-d_{\fb}$, we have 
\begin{align*}
h(\fa \ast \fb)_n &= \sum_{i=n-d_{\fa}+1}^{d_{\fb}-1} \sum_{j=n-d_{\fb}+1}^{d_{\fa}-1} h_{n-i}(\fa) h_{n-j}(\fb) m_{i,j} = \sum_{i=d_{\fb}-d_{\fa}}^{d_{\fb}-1} \sum_{j=0}^{d_{\fa}-1} h_{n-i}(\fa) h_{n-j}(\fb) m_{i,j} \\
&=  \sum_{i=0}^{d_{\fa}-1} \sum_{j=0}^{d_{\fa}-1} h_{n-d_{\fb}+d_{\fa}-i}(\fa) h_{n-j}(\fb) m_{i+d_{\fb}-d_{\fa},j} =
\rev_{n-d_{\fb}+d_{\fa},d_{\fa}}(\fh(\fa))^T \cdot M_{d_{\fa}}(d_{\fb},d_{\fb}-d_{\fa})\cdot \rev_{n,d_{\fa}}(\fh(\fb)). 
\end{align*}
\end{proof}

\section{Eigenvalues of products of transformation matrices}\label{sect:Spectrum}

Now having the squares matrices $M_{d_{\fa}}(d_{\fb},t)$ in hand, we proceed with the study of their eigenvalues. First note that by 
the symmetry of the binomial coefficient, the entry in the $i$\textsuperscript{th} row and the $j$\textsuperscript{th} column of the matrix can be rewritten as
$$
\binom{d_{\fa}+i+t-j-1}{d_{\fa}-j-1} \binom{d_{\fb}-i-t+j-1}{j}.
$$
If we fix $i$, $j$ and $d_{\fa}$, the last expression is obviously a polynomial in $t$ and $d_{\fb}$ and thus we may think of 
$t$ and $d_{\fb}$ as indeterminates. In the sequel, we write $d$ instead of 
$d_{\fb}$. While working with the matrices $M_{d_{\fa}}(d,t)$, we performed some experiments with the computer algebra system Mathematica \cite{Mathematica} and discovered, that the eigenvalues of $M_{d_{\fa}}(d,t)$ seemed to follow a simple pattern. For instance, if $d_{\fa}=6$, we obtain the following list of eigenvalues:
\begin{align*}
 -1,&\\
 5+d,&\\ 
-\frac{1}{2}(4+d)(5+d),& \\
\frac{1}{6}(3+d)(4+d)(5+d),&\\
 -\frac{1}{24}(2+d) (3+d)(4+d)(5+d),&\\ 
\frac{1}{120}(1+d)(2+d)(3+d)(4+d)(5+d).&
\end{align*}
Subsequently, we computed the eigenvalues of arbitrary products of the matrices $M_{d_{\fa}}(d,t)$ and observed that this behavior 
continued. Indeed, the eigenvalues of $M_5(d_1,t_1) \cdot M_5(d_2,t_2)$ are:
\begin{align*}
1,&\\
(4+d_1)(4+d_2),&\\
\frac{1}{4} (3+d_1)(4+d_1)(3+d_2)(4+d_2),& \\
\frac{1}{36} (2+d_1)(3+d_1)(4+d_1) (2+d_2)(3+d_2)(4+d_2),& \\
\frac{1}{576} (1+d_1)(2+d_1)(3+d_1)(4+d_1) (1+d_2)(2+d_2)(3+d_2)(4+d_2).&
\end{align*}
The purpose of this section is to unravel this mystery and give an explanation for this behavior. To be more precise, we 
prove the following theorem. 

\begin{theo}\label{thm:eigenvalues}
Let $d_{\fa}, n$ be positive integers. The eigenvalues of the product of matrices
$$
M_{d_{\fa}}(d_1,t_1) \cdot M_{d_{\fa}}(d_2,t_2) \cdots M_{d_{\fa}}(d_n,t_n)
$$
are $\prod\limits_{j=1}^{n} \lambda_{d_{\fa}}(d_j,i)$, $i=0,1,\ldots, d_{\fa}-1$, 
where $\lambda_{d_{\fa}}(d,i):= (-1)^{d_{\fa}+i+1} \binom{d_{\fa}+d-1}{i}.$
\end{theo}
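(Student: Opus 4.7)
\emph{Strategy.} Since the claimed eigenvalues of the product factor as term-wise products of the individual eigenvalues, my plan is to exhibit a single basis of $\mathbb{C}^{d_{\fa}}$, independent of $d$ and $t$, in which every $M_{d_{\fa}}(d, t)$ is upper triangular with diagonal $(\lambda_{d_{\fa}}(d, d_{\fa}-1), \lambda_{d_{\fa}}(d, d_{\fa}-2), \ldots, \lambda_{d_{\fa}}(d, 0))$. Products of upper triangular matrices are upper triangular and multiply diagonals term-wise, so granted such a common triangularization the product $M_{d_{\fa}}(d_1, t_1) \cdots M_{d_{\fa}}(d_n, t_n)$ is upper triangular with diagonal $\bigl(\prod_{j=1}^{n} \lambda_{d_{\fa}}(d_j, d_{\fa}-1-k)\bigr)_{k=0}^{d_{\fa}-1}$, and its eigenvalues are precisely those diagonal entries, which, up to the bijection $k \mapsto d_{\fa}-1-k$, are the quantities asserted by the theorem.

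\emph{The candidate basis and what needs to be shown.} For $k = 0, 1, \ldots, d_{\fa} - 1$, let $p_k \in \mathbb{C}^{d_{\fa}}$ be the column vector with $i$-th coordinate $\binom{i}{k}$; since $(p_k)_i = 0$ for $i < k$ and $(p_k)_k = 1$, the matrix with columns $p_0, p_1, \ldots, p_{d_{\fa}-1}$ is lower unitriangular and $\{p_k\}$ is a basis. The key claim to prove is
\begin{equation*}
M_{d_{\fa}}(d, t)\, p_k = \lambda_{d_{\fa}}(d, d_{\fa}-1-k)\, p_k + \sum_{\ell < k} c_{k, \ell}(d, t)\, p_\ell,
\end{equation*}
equivalently that $(M_{d_{\fa}}(d, t)\, p_k)_i$, viewed as a function of $i$, is a polynomial of degree $\leq k$ whose coefficient of $\binom{i}{k}$ in the binomial basis equals $\lambda_{d_{\fa}}(d, d_{\fa}-1-k) = (-1)^{k} \binom{d_{\fa}+d-1}{d_{\fa}-1-k}$, independent of $t$. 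As a sanity check, when $k = 0$ the claim reduces to showing $p_0 = (1, 1, \ldots, 1)^{T}$ is a common eigenvector with eigenvalue $\binom{d_{\fa}+d-1}{d_{\fa}-1}$; rewriting $\binom{d_{\fa}+i+t-j-1}{i+t} = \binom{d_{\fa}+i+t-j-1}{d_{\fa}-j-1}$ and substituting $k' = d_{\fa}-j-1$ turns the required sum into
\[
\sum_{k'=0}^{d_{\fa}-1} \binom{(i+t)+k'}{k'}\binom{(d-i-t+d_{\fa}-2)-k'}{d_{\fa}-1-k'},
\]
which by the Chu-Vandermonde identity $\sum_k \binom{a+k}{k}\binom{b-k}{c-k} = \binom{a+b+1}{c}$ (with $a = i+t$, $b = d-i-t+d_{\fa}-2$, $c = d_{\fa}-1$) evaluates to $\binom{d_{\fa}+d-1}{d_{\fa}-1}$; the dependence on $i$ and $t$ telescopes because $a+b$ is independent of them.

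\emph{Main step and expected obstacle.} For general $k \geq 1$, I would start from
\[
(M_{d_{\fa}}(d, t)\, p_k)_i = \sum_{j=k}^{d_{\fa}-1}\binom{d_{\fa}+i+t-j-1}{i+t}\binom{d-i-t+j-1}{j}\binom{j}{k},
\]
apply the trinomial revision $\binom{d-i-t+j-1}{j}\binom{j}{k} = \binom{d-i-t+j-1}{k}\binom{d-i-t+j-k-1}{j-k}$, substitute $l = j - k$, and try to reduce the resulting triple sum via a further Vandermonde convolution. The hard part is that the $l$-dependent binomials have formal degrees in $i$ summing to $d_{\fa}-1$, so the polynomial-of-degree-$\leq k$ conclusion requires massive cancellation; I expect this to yield to an appropriate specialisation of the triple identity \eqref{eqId} already invoked in Lemma~\ref{lem:id}, or failing that to a Wilf-Zeilberger-style certificate. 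The coefficient of $\binom{i}{k}$ coming out independent of $t$ is the genuinely nontrivial phenomenon that forces the clean product structure of the eigenvalues; once it is in hand, the triangularization conclusion of the first paragraph finishes the theorem.
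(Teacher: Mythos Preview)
Your overall strategy---exhibit a single change of basis, independent of $d$ and $t$, that simultaneously upper-triangularises every $M_{d_{\fa}}(d,t)$---is exactly the paper's strategy, and your key claim is in fact true. Your basis $\{p_k\}$ with $(p_k)_j=\binom{j}{k}$ and the paper's basis $\{v_m\}$ with $(v_m)_j=\binom{d_{\fa}-1-j}{m}$ span the very same flag $\mathcal F_k=\{$vectors whose $j$-th entry is a polynomial of degree $\le k$ in $j\}$; hence ``$M_{d_{\fa}}(d,t)$ is upper triangular with diagonal $(\lambda_{d_{\fa}}(d,d_{\fa}-1-k))_k$ in the $p$-basis'' is logically equivalent to the paper's Lemmas~\ref{lemTrafo} and~\ref{lem:vw} combined.

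The gap is that you do not actually prove the claim for $k\ge 1$: you correctly flag the obstacle (``massive cancellation'') and defer to an unspecified specialisation of a triple identity or a WZ certificate. The trinomial revision you propose, $\binom{d-i-t+j-1}{j}\binom{j}{k}=\binom{d-i-t+j-1}{k}\binom{d-i-t+j-1-k}{j-k}$, leaves the factor $\binom{d-i-t+j-1}{k}$ still depending on the summation index $j$, so one further Vandermonde does not close the sum. The paper sidesteps this by pairing the \emph{other} two binomials: with its $v_m$ one has
\[
\binom{d_{\fa}+i+t-j-1}{i+t}\binom{d_{\fa}-1-j}{m}=\binom{i+t+m}{m}\binom{d_{\fa}+i+t-j-1}{i+t+m},
\]
which extracts a $j$-independent factor and leaves a single Chu--Vandermonde sum (this is Lemma~\ref{lemTrafo}, giving $M_{d_{\fa}}(d,t)\,v_m=w_m(d,t)$ in closed form). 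A second Chu--Vandermonde (Lemma~\ref{lem:vw}) then writes $w_m$ in the $v$-basis via an explicit upper-triangular matrix $A_{d_{\fa}}(d,t)$, completing the triangularisation. If you replace $p_k$ by $v_k$ (or, equivalently, expand $\binom{j}{k}$ in the basis $\binom{d_{\fa}-1-j}{m}$ before summing), your outline becomes a complete proof; as written it is a correct plan with the decisive identity still to be supplied.
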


Since in our special setting  
the numbers $\lambda_{d_{\fa}}(d_{\fb},i)$ are all distinct for $i=0,1,\ldots,d_{\fa}-1$, 
Theorem \ref{thm:eigenvalues} immediately implies that, in this case, the matrices $M_{d_{\fa}}(d_{\fb},t)$ are diagonalizable, 
\begin{cor} Let $d_{\fa}, d_{\fb}$ be positive integers with $d_{\fa} \le d_{\fb}$. Then $M_{d_{\fa}}(d_{\fb},t)$ is diagonalizable.
\end{cor}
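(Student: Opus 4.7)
The plan is to derive the corollary as an immediate consequence of Theorem~\ref{thm:eigenvalues} applied to a single factor (take $n=1$), combined with the standard linear-algebra fact that a $d_{\fa} \times d_{\fa}$ matrix with $d_{\fa}$ pairwise distinct eigenvalues is diagonalizable. Specializing Theorem~\ref{thm:eigenvalues} to one matrix, the eigenvalues of $M_{d_{\fa}}(d_{\fb},t)$ are
\[
\lambda_{d_{\fa}}(d_{\fb},i) = (-1)^{d_{\fa}+i+1} \binom{d_{\fa}+d_{\fb}-1}{i}, \qquad i = 0, 1, \ldots, d_{\fa}-1,
\]
so the whole task reduces to verifying that these $d_{\fa}$ numbers are pairwise distinct.

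The step I would carry out next is to show that the absolute values $\binom{d_{\fa}+d_{\fb}-1}{i}$ are \emph{strictly increasing} in $i$ on the range $0 \le i \le d_{\fa}-1$. Using the ratio $\binom{n}{i+1}/\binom{n}{i} = (n-i)/(i+1)$ with $n = d_{\fa}+d_{\fb}-1$, strict monotonicity on $0 \le i \le d_{\fa}-2$ is equivalent to $i+1 < d_{\fa}+d_{\fb}-i-1$ for all such $i$, i.e.\ to $d_{\fa} \le d_{\fb}+1$, which holds by the hypothesis $d_{\fa} \le d_{\fb}$. This shows the absolute values of the eigenvalues are all different, so a fortiori the (possibly signed) eigenvalues themselves are all different, and diagonalizability follows.

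The main obstacle is essentially nonexistent for this corollary: the heavy lifting has already been done in Theorem~\ref{thm:eigenvalues}. The only thing that could go wrong would be a collision of the form $\lambda_{d_{\fa}}(d_{\fb},i) = \lambda_{d_{\fa}}(d_{\fb},j)$ with $i<j$, and the hypothesis $d_{\fa}\le d_{\fb}$ is precisely what rules this out by keeping the index range inside the strictly increasing part of the row of Pascal's triangle indexed by $d_{\fa}+d_{\fb}-1$. I would also briefly note that the hypothesis is sharp in the sense that the binomial sequence stabilizes at its maximum only once the index reaches $\lfloor (d_{\fa}+d_{\fb}-1)/2\rfloor$, which is exactly why the bound $d_{\fa}\le d_{\fb}$ (equivalently $d_{\fa}-1 \le \lfloor(d_{\fa}+d_{\fb}-1)/2\rfloor$) is the natural assumption to impose.
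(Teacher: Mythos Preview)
Your proposal is correct and matches the paper's approach: the paper also derives the corollary as an immediate consequence of Theorem~\ref{thm:eigenvalues} (with $n=1$), noting that the eigenvalues $\lambda_{d_{\fa}}(d_{\fb},i)$, $i=0,\ldots,d_{\fa}-1$, are pairwise distinct when $d_{\fa}\le d_{\fb}$. The paper does not spell out the distinctness argument, but your verification via the strict monotonicity of $\binom{d_{\fa}+d_{\fb}-1}{i}$ on the range $0\le i\le d_{\fa}-1$ is exactly the intended reasoning.
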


The proof of Theorem~\ref{thm:eigenvalues} is based on the following two lemmas.

\begin{lem} \label{lemTrafo}
Let $d_{\fa}$ be a positive integer, $m$ be a non-negative integer and  
define two column vectors\\ 
$v_m:=\left( \binom{d_{\fa}-j-1}{m} \right)^T_{0 \le j \le d_{\fa}-1}$ and $w_m(d,t):=\left((-1)^{d_{\fa}+1+m} \binom{i+t+m}{m} \binom{-d-m-1}{d_{\fa}-m-1} \right)^T_{0 \le i \le d_{\fa}-1}$.
Then $$M_{d_{\fa}}(d,t) \cdot v_m = w_m(d,t).$$
\end{lem}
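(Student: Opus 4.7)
The statement asserts that the product of the matrix $M_{d_{\fa}}(d,t)$ with the very simple vector $v_m=\bigl(\binom{d_{\fa}-j-1}{m}\bigr)^T$ collapses into a vector $w_m(d,t)$ whose $i$-th coordinate is essentially a product of two binomials. My plan is to do the computation directly: write down the $i$-th coordinate of $M_{d_{\fa}}(d,t)\cdot v_m$ as the sum
\begin{equation*}
S_i = \sum_{j=0}^{d_{\fa}-1} \binom{d_{\fa}+i+t-j-1}{i+t}\binom{d-i-t+j-1}{j}\binom{d_{\fa}-j-1}{m},
\end{equation*}
and reduce it to a single product of binomials by a short manipulation based on the triple binomial identity \eqref{eqId} from Graham--Knuth--Patashnik, in the same spirit as was done in the proof of Lemma~\ref{lem:id}.

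Concretely, I would first reindex by $l=d_{\fa}-1-j$ so that the factor $\binom{d_{\fa}-j-1}{m}$ becomes $\binom{l}{m}$; this also has the benefit of making the $j=d_{\fa}-1-m$ cutoff implicit via the vanishing of $\binom{l}{m}$. Next, I would use the standard identity $\binom{n}{k}=(-1)^k\binom{k-n-1}{k}$ to turn each of the remaining two binomials into one whose upper index is independent of $l$; this brings $S_i$ into the form of a sum over $l$ of three binomials of the shape $\binom{A}{l}\binom{B}{C-l}\binom{D+l}{E}$ required by \eqref{eqId}, up to a global sign.

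The main obstacle, and the step deserving most care, is the bookkeeping of signs and the choice of the parameters $m,r,s,t$ in \eqref{eqId} that correctly match the transformed sum. Once this matching is carried out, \eqref{eqId} collapses the sum into a product of two binomials, and a final application of $\binom{n}{k}=(-1)^k\binom{k-n-1}{k}$ (to present one of the factors as $\binom{-d-m-1}{d_{\fa}-m-1}$) should produce exactly
\begin{equation*}
S_i = (-1)^{d_{\fa}+1+m}\binom{i+t+m}{m}\binom{-d-m-1}{d_{\fa}-m-1},
\end{equation*}
which is the $i$-th entry of $w_m(d,t)$. Since $i$ is arbitrary and the calculation uses no special property of $i$ other than that it is a nonnegative integer, this establishes the identity $M_{d_{\fa}}(d,t)\cdot v_m=w_m(d,t)$. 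I expect everything except the sign/parameter matching in the Graham--Knuth--Patashnik identity to be routine.
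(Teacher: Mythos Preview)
Your overall plan---write out the $i$-th coordinate and collapse the triple binomial sum---is the right strategy, but the specific tool you intend to use will not fire. After your reindexing $l=d_{\fa}-1-j$ and the two sign flips, the sum becomes (up to the global sign $(-1)^{d_{\fa}-1}$)
\[
\sum_{l\ge 0}\binom{-i-t-1}{l}\binom{i+t-d}{d_{\fa}-1-l}\binom{l}{m}.
\]
In the identity \eqref{eqId} the three factors have the shape $\binom{M-R+S}{l}\binom{T+R-S}{T-l}\binom{R+l}{M+T}$, and these carry the built-in constraint $(M-R+S)+(T+R-S)=M+T$: the two upper indices of the first pair must add up to the lower index of the third. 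In your sum this would force $(-i-t-1)+(i+t-d)=m$, i.e.\ $d=-m-1$, so the parameter matching you flag as ``the step deserving most care'' is in fact impossible in general, not merely delicate.

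The paper avoids this obstruction by a different reduction. Before summing, it merges two of the three binomials via the trinomial-revision identity
\[
\binom{d_{\fa}+i+t-j-1}{i+t}\binom{d_{\fa}-j-1}{m}=\binom{i+t+m}{m}\binom{d_{\fa}+i+t-j-1}{i+t+m},
\]
which pulls the $j$-independent factor $\binom{i+t+m}{m}$ out of the sum. (This step uses that $d_{\fa}+i+t-j-1\ge 0$, so the paper first reduces to nonnegative integer $t$ by the polynomiality of both sides.) What remains is a single Chu--Vandermonde convolution, not a triple sum, and that gives the factor $\binom{-d-m-1}{d_{\fa}-m-1}$ directly. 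If you want to keep your $l$-reindexed form, the analogous fix is $\binom{-i-t-1}{l}\binom{l}{m}=\binom{-i-t-1}{m}\binom{-i-t-1-m}{l-m}$, after which Vandermonde applies; either way, it is this merging step, and not \eqref{eqId}, that does the work here.
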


\begin{proof} Since the identity in question can be seen as a set of polynomial identities in $t$, it suffices to deal with the case that $t$ is a non-negative integer. 
Let $0 \le i \le d_{\fa}-1$. Then the $i$\textsuperscript{th} entry of $M_{d_{\fa}}(d,t) \cdot v_m$ is given as
\begin{multline*}
\left( M_{d_{\fa}}(d,t) \cdot v_m\right)_i = \sum_{j=0}^{{d_{\fa}}-1} \binom{{d_{\fa}}+i+t-j-1}{i+t} \binom{d-i-t+j-1}{j}  \binom{{d_{\fa}}-j-1}{m}  \\
= \sum_{j=0}^{{d_{\fa}}-1} \binom{d-i-t+j-1}{j} \frac{({d_{\fa}}+i+t-j-1)({d_{\fa}}+i+t-j-2) \cdots ({d_{\fa}}-j)}{(i+t)!} \\ 
\times \frac{({d_{\fa}}-j-1)({d_{\fa}}-j-2) \cdots ({d_{\fa}}-j-m)}{m!} \\
=  \binom{i+t+m}{m} \sum_{j=0}^{{d_{\fa}}-1} \binom{d-i-t+j-1}{j} \binom{{d_{\fa}}+i+t-j-1}{i+t+m}.
\end{multline*}
Since ${d_{\fa}}+i+t-j-1 \ge 0$, the last expression equals 
$$
\binom{i+t+m}{m} \sum_{j=0}^{{d_{\fa}}-1} (-1)^j \binom{-d+i+t}{j} \binom{{d_{\fa}}+i+t-j-1}{{d_{\fa}}-j-1-m}. 
$$

Using the identity $\binom{n}{k}= (-1)^k \binom{k-n-1}{k}$ for the second binomial coefficient in this sum, we can conclude
$$
\left( M_{d_{\fa}}(d,t) \cdot v_m\right)_i=\binom{i+t+m}{m} \sum_{j=0}^{{d_{\fa}}-1} (-1)^{{d_{\fa}}+1+m} \binom{-d+i+t}{j} \binom{-m-i-t-1}{{d_{\fa}}-j-1-m}.
$$
Since ${d_{\fa}}-1-m \le {d_{\fa}}-1$, this is by the Chu-Vandermonde summation \cite[pp.59 -- 60]{Askey} equal to 
$$
(-1)^{{d_{\fa}}+1+m} \binom{i+t+m}{m} \binom{-d-m-1}{{d_{\fa}}-m-1}=\left(w_m(d,t)\right)_i.  
$$
\end{proof}

For the proof of Theorem \ref{thm:eigenvalues}, some more notations are required. 
We let $V_{d_{\fa}}$ be the $d_{\fa}\times d_{\fa}$ matrix with columns $v_j$, $0\leq j\leq d_{\fa}-1$, i.\,e.,  
$$V_{d_{\fa}}:=(v_0,v_1,\ldots,v_{d_{\fa}-1})=\left( \binom{d_{\fa}-i-1}{j} \right)_{0 \le i, j \le d_{\fa}-1}$$ 
Similarly, we collect the column vectors $w_j$, $0\leq j\leq d_{\fa}-1$, in the matrix $W_{d_{\fa}}(d,t)$, i.\,e., 
$$W_{d_{\fa}}(d,t):=(w_0(d,t),w_1(d,t),\ldots,w_{d_{\fa}-1}(d,t)):=\left( (-1)^{d_{\fa}+1+j} \binom{i+t+j}{j} \binom{-d-j-1}{d_{\fa}-j-1} \right)_{0 \le i, j \le d_{\fa}-1}.$$ 
With these notations, Lemma~\ref{lemTrafo} can be rephrased as $M_{d_{\fa}}(d,t) \cdot V_{d_{\fa}} = W_{d_{\fa}}(d,t)$.\\ 
Moreover, we will need the following upper-triangular matrices
$$
A_{d_{\fa}}(d,t):= \left( (-1)^{d_{\fa}+1} \binom{-d_{\fa}-t}{j-i} \binom{-d-j-1}{d_{\fa}-j-1} \right)_{0 \le i, j \le d_{\fa}-1}. 
$$
The three matrices are related as follows.

\begin{lem}\label{lem:vw} Let $d_{\fa}$ be a positive integer. Then the following matrix identity holds
$$W_{d_{\fa}}(d,t)= V_{d_{\fa}} \cdot A_{d_{\fa}}(d,t).$$
\end{lem}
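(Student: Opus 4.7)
My plan is a direct entry-wise verification. Writing out the $(i,j)$ entry of the right-hand side from the definitions of $V_{d_{\fa}}$ and $A_{d_{\fa}}(d,t)$, I get
\begin{equation*}
(V_{d_{\fa}} \cdot A_{d_{\fa}}(d,t))_{i,j} = (-1)^{d_{\fa}+1} \binom{-d-j-1}{d_{\fa}-j-1} \sum_{k=0}^{d_{\fa}-1} \binom{d_{\fa}-i-1}{k} \binom{-d_{\fa}-t}{j-k},
\end{equation*}
and the factor outside the sum already matches the corresponding factor in $(W_{d_{\fa}}(d,t))_{i,j}$ up to the extra sign $(-1)^j$. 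So after cancellation the claim reduces to the binomial identity
\begin{equation*}
\sum_{k=0}^{d_{\fa}-1} \binom{d_{\fa}-i-1}{k} \binom{-d_{\fa}-t}{j-k} = (-1)^j \binom{i+t+j}{j}.
\end{equation*}

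The first step is to check that the summation range is harmless. The binomial $\binom{d_{\fa}-i-1}{k}$ vanishes for $k > d_{\fa}-i-1$, while $\binom{-d_{\fa}-t}{j-k}$ vanishes (under the extended definition used throughout) whenever $j-k < 0$. Hence extending the sum over all $k \ge 0$ changes nothing, and I can apply the Chu-Vandermonde convolution $\sum_k \binom{m}{k}\binom{n}{j-k}=\binom{m+n}{j}$ with $m = d_{\fa}-i-1$ and $n = -d_{\fa}-t$. This yields
\begin{equation*}
\sum_{k \ge 0} \binom{d_{\fa}-i-1}{k} \binom{-d_{\fa}-t}{j-k} = \binom{-i-t-1}{j}.
\end{equation*}

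To finish, I apply the standard negation identity $\binom{-n}{j} = (-1)^j \binom{n+j-1}{j}$ (already used several times in the paper) with $n = i+t+1$, obtaining $(-1)^j \binom{i+t+j}{j}$, which is exactly the required right-hand side. Substituting back gives $(V_{d_{\fa}} \cdot A_{d_{\fa}}(d,t))_{i,j} = (-1)^{d_{\fa}+1+j} \binom{i+t+j}{j}\binom{-d-j-1}{d_{\fa}-j-1} = (W_{d_{\fa}}(d,t))_{i,j}$, completing the proof.

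There is no serious obstacle here: the only mild subtlety is justifying the extension of the summation range so that Vandermonde applies cleanly; apart from that, the computation is routine once the common factor $(-1)^{d_{\fa}+1}\binom{-d-j-1}{d_{\fa}-j-1}$ is pulled out and one remembers the two standard binomial reformulations.
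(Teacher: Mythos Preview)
Your proof is correct and follows essentially the same route as the paper's: compute the $(i,j)$ entry of $V_{d_{\fa}}\cdot A_{d_{\fa}}(d,t)$, pull out the factor $(-1)^{d_{\fa}+1}\binom{-d-j-1}{d_{\fa}-j-1}$, apply Chu--Vandermonde to obtain $\binom{-i-t-1}{j}$, and then use the negation identity to rewrite this as $(-1)^j\binom{i+t+j}{j}$. The only difference is that you spell out why the summation range may be extended before invoking Vandermonde, which the paper leaves implicit.
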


\begin{proof} The claimed identity can be deduced from the Chu-Vandermonde summation \cite[pp.59 -- 60]{Askey} and $\binom{n}{k}= (-1)^{k} \binom{k-n-1}{k}$
as follows:
\begin{align*}
\left( V_{d_{\fa}} \cdot A_{d_{\fa}}(d,t) \right)_{i,j} &= \sum_{k=0}^{d_{\fa}-1} \binom{d_{\fa}-i-1}{k} (-1)^{d_{\fa}+1} \binom{-d_{\fa}-t}{j-k} \binom{-d-j-1}{d_{\fa}-j-1} \\
& =(-1)^{d_{\fa}+1} \binom{-i-t-1}{j} \binom{-d-j-1}{d_{\fa}-j-1} \\
&= (-1)^{d_{\fa}+1+j} \binom{i+j+t}{j} \binom{-d-j-1}{d_{\fa}-j-1} =W_{d_{\fa}}(d,t)_{i,j}.
\end{align*}
\end{proof}

It is easy to check that $V_{d_{\fa}}=(v_{i,j})_{0 \le i, j \le d_{\fa}-1}$ is in fact invertible with  
$$V_{d_{\fa}}^{-1}=\left (v_{d_{\fa}-1-i,d_{\fa}-1-j} (-1)^{i+j+d_{\fa}+1}\right)_{0 \le i, j \le d_{\fa}-1}.$$ 

We can finally provide the proof of Theorem \ref{thm:eigenvalues}.

\begin{proof}[Proof of Theorem~\ref{thm:eigenvalues}]
It follows from the relation $M_{d_{\fa}}(d_i,t_i)=W_{d_{\fa}}(d_i,t_i) \cdot V_{d_{\fa}}^{-1}$ (see the discussion after Lemma~\ref{lemTrafo}) that
$$
M_{d_{\fa}}(d_1,t_1) \cdot M_{d_{\fa}}(d_2,t_2) \cdots M_{d_{\fa}}(d_n,t_n) 
= W_{d_{\fa}}(d_1,t_1) V_{d_{\fa}}^{-1} W_{d_{\fa}}(d_2,t_2) V_{d_{\fa}}^{-1} \cdots W_{d_{\fa}}(d_n,t_n) V_{d_{\fa}}^{-1}.
$$
Now we use $V_{d_{\fa}}^{-1} \cdot W_{d_{\fa}}(d_i,t_i) = A_{d_{\fa}}(d_i,t_i)$ (Lemma~\ref{lem:vw}) to see that the product of matrices on the right-hand side of the last equation is equal to 
$$
W_{d_{\fa}}(d_1,t_1) \cdot A_{d_{\fa}}(d_2,t_2) \cdots A_{d_{\fa}}(d_n,t_n)  \cdot V_{d_{\fa}}^{-1}.
$$
Since conjugating with $V_{d_{\fa}}$ leaves the eigenvalues unchanged, it suffices to compute the eigenvalues of 
$$
A_{d_{\fa}}(d_1,t_1) \cdot A_{d_{\fa}}(d_2,t_2) \cdots A_{d_{\fa}}(d_n,t_n).
$$
Once again we used $V_{d_{\fa}}^{-1} \cdot W_{d_{\fa}}(d_1,t_1) = A_{d_{\fa}}(d_1,t_1)$, see Lemma~\ref{lem:vw}.
The matrices $A_{d_{\fa}}(d_i,t_i)$ are however upper triangular and so is their product. Hence, the eigenvalues of this product 
are just the entries on the main diagonal, which can be computed as the coordinatewise product of the main diagonals of the individual matrices. 
It is easy to verify, that for $0\leq j\leq d_{\fa}-1$ the $j$\textsuperscript{th} diagonal entry of $
A_{d_{\fa}}(d_i,t_i)$ equals $\lambda_{d_{\fa}}(d_i,d_{\fa}-1-j)$ which finishes the proof.
\end{proof}

\section{Eigenvectors of $M_{d_{\fa}}(d,t)$} \label{sect:Eigenspaces}
In this section, we will derive an explicit formula for the eigenvectors of the matrices $M_{d_{\fa}}(d,t)$. For this computation it is relevant 
that the eigenvalues $\lambda_{d_{\fa}}(d,i)=(-1)^{{d_{\fa}}+1+i} \binom{d+{d_{\fa}}-1}{i}$, $i=0,1,\ldots, {d_{\fa}}-1$, of $M_{d_{\fa}}(d,t)$ 
are pairwise distinct, which is true since $d$ is treated as an indeterminate. (Note that if we substitute $d$ by an arbitrary integer, 
certain eigenvalues might coincide. More precisely, we have $\lambda_{d_{\fa}}(d,i)=\lambda_{d_{\fa}}(d,{d_{\fa}}-i+d-1)$ if  $0 \leq i \le \lfloor \frac{d_{\fa} + d - 1}{2} \rfloor$ 
and $d \equiv d_{\fa} + 1 \mod 2$.)

Clearly, it suffices to 
compute the eigenvector of 
$V^{-1}_{d_{\fa}} \cdot M_{d_{\fa}}(d,t)  \cdot V_{d_{\fa}} =
 A_{d_{\fa}}(d,t)$ (the identity follows from the discussion after Lemma~\ref{lemTrafo} and Lemma~\ref{lem:vw}), since $a$ is an eigenvector of 
$A_{d_{\fa}}(d,t)$ with respect to the  eigenvalue $\lambda_{d_{\fa}}(d,i)$ if and only of 
$V_{d_{\fa}} \cdot a$ is an eigenvector of $M_{d_{\fa}}(d,t)$ with respect to the eigenvalue $\lambda_{d_{\fa}}(d,i)$. %  the eigenvalues of the two matrices of course coincide. 

Now, fix $i \in \{0,1,\ldots,d_{\fa}-1\}$ and let $a:=(a_0,a_1,\ldots,a_{{d_{\fa}}-1})^T\in \mathbb{R}^{d_{\fa}}$ be an eigenvector of 
$A_{d_{\fa}}(d,t)$ with eigenvalue $\lambda_{d_{\fa}}(d,i)$. We multiply both sides of $A_{d_{\fa}}(d,t)\cdot a = \lambda_{d_{\fa}}(d,i)\cdot a$ with 
the matrix $X_{d_{\fa}}(t): = \left( \binom{d_{\fa}+t}{k-j} \right)_{0 \le j,k \le d_{\fa}-1}$ from the left. Using the Chu-Vandermonde 
summation, we see that  
$$
\left( X_{d_{\fa}}(t) \cdot A_{d_{\fa}}(d,t) \right)_{j,m} = \sum_{k=0}^{d_{\fa}-1} \binom{d_{\fa}+t}{k-j} \binom{-d_{\fa}-t}{m-k} 
\binom{-d-m-1}{d_{\fa}-m-1} (-1)^{d_{\fa}+1} = \delta_{j,m} \binom{-d-m-1}{d_{\fa}-m-1} (-1)^{d_{\fa}+1}.
$$ This implies that $a$ fulfills
$$
(-1)^{d_{\fa}+1 }\diag_{0 \le j \le d_{\fa}-1} \left( \binom{-d-j-1}{d_{\fa}-j-1}  \right) \cdot a = \lambda_{d_{\fa}}(d,i) X_{d_{\fa}}(t) \cdot a.
$$
Cancelling powers of $-1$, this yields 
\begin{equation} \label{eq2}
a_j\binom{-d-j-1}{{d_{\fa}}-j-1}=(-1)^i\binom{d+{d_{\fa}}-1}{i}\sum_{m=j}^{{d_{\fa}}-1}a_m\binom{{d_{\fa}}+t}{m-j}
\end{equation}
for $j=0,\ldots,{d_{\fa}}-1$, i.\,e.,
\begin{equation}\label{eq:recursion}
a_j=(-1)^{i+d_{\fa}+j+1} \frac{\binom{d+{d_{\fa}}-1}{i}}{\binom{d_{\fa}+d-1}{{d_{\fa}}-j-1}}\sum_{m=j}^{{d_{\fa}}-1}a_m\binom{{d_{\fa}}+t}{m-j},
\end{equation}
where we have used the identity $\binom{n}{k}=(-1)^k\binom{k-n-1}{k}$. 
%Man muss nicht argumentieren, dass das eine nicht-triviale Loesung hat: Zum einen wissen wir schon von oben, dass ein Eigenvektor existiert, zum anderen rechnen wir in ja explizit aus.
%We want to remark that the last equation has a solution since the matrix $B=(b_{uv})_{0\leq u,v\leq {d_{\fa}}-1}$ with $b_{uv}=(-1)^i\frac{\binom{d+{d_{\fa}}-1}{i}}{\binom{-d-v-1}{{d_{\fa}}-v-1}}\binom{{d_{\fa}}+t}{u-v}$ has the eigenvalue $1$. Since $B$ is a lower triangular matrix, this is the case if and only if there exists a diagonal entry $b_{uu}$ of $B$ such that $b_{uu}=1$. Indeed, choosing $u={d_{\fa}}-i-1$ we have 
%\begin{equation*}
%b_{uu}=(-1)^i\frac{\binom{d+{d_{\fa}}-1}{i}}{\binom{-d-{d_{\fa}}+i}{i}}=(-1)^i\frac{\binom{d+{d_{\fa}}-1}{i}}{(-1)^i\binom{d+{d_{\fa}}-1}{i}}=1.
%\end{equation*}
%Finally, we infer from \eqref{eq:recursion} that the entries $a_0,\ldots,a_{{d_{\fa}}-1}$ can be computed via
%\begin{align}\label{rec2}
%a_j&=(-1)^i \frac{\binom{d+{d_{\fa}}-1}{i}}{\binom{-d-j-1}{{d_{\fa}}-j-1}}\sum_{m=j}^{{d_{\fa}}-1}a_m\binom{{d_{\fa}}+t}{m-j}\nonumber\\
%&=(-1)^{i+{d_{\fa}}+j+1}\frac{\binom{d+{d_{\fa}}-1}{i}}{\binom{d+{d_{\fa}}-1}{{d_{\fa}}-j-1}}\sum_{m=j}^{{d_{\fa}}-1}a_m\binom{{d_{\fa}}+t}{m-j},
%\end{align}
%where the last equality follows from $\binom{n}{k}=(-1)^k\binom{k-n-1}{k}$. 
Solving this equation first for $j={d_{\fa}}-1$ and then subsequently for $j={d_{\fa}}-2,{d_{\fa}}-3, \ldots,1,0$ by backward substitution, it is possible to determine the eigenvector $a$. More precisely, from \eqref{eq:recursion} we get 
\begin{equation*}
a_j\left(1-(-1)^{i+{d_{\fa}}+j+1}\frac{\binom{d+{d_{\fa}}-1}{i}}{\binom{d+{d_{\fa}}-1}{{d_{\fa}}-j-1}}\right)=(-1)^{i+{d_{\fa}}+j+1}\frac{\binom{d+{d_{\fa}}-1}{i}}{\binom{d+{d_{\fa}}-1}{{d_{\fa}}-j-1}}\sum_{m=j+1}^{{d_{\fa}}-1}a_m\binom{{d_{\fa}}+t}{m-j}
\end{equation*}
for $0\leq j\leq {d_{\fa}}-1$. Since, for $j\neq {d_{\fa}}-1-i$ the binomial coefficients $\binom{d+{d_{\fa}}-1}{i}$ and $\binom{d+{d_{\fa}}-1}{{d_{\fa}}-j-1}$ are polynomials of distinct degree in $d$, we conclude --~by backward substitution~-- that $a_j=0$ for ${d_{\fa}}-1\geq j\geq {d_{\fa}}-i$. 
Similarly, for $j={d_{\fa}}-1-i$, it follows that $a_j$ can be chosen arbitrarily in $\mathbb{R}$. 
%Since --~as just argued~-- $1-(-1)^{i+{d_{\fa}}+j+1}\frac{\binom{d+{d_{\fa}}-1}{i}}{\binom{d+{d_{\fa}}-1}{{d_{\fa}}-j-1}}\neq 0$ for $j\neq {d_{\fa}}-1-i$, it follows from this discussion that 
If ${d_{\fa}}-2-i\geq j\geq 0$, we have 
\begin{equation}\label{eq:final}
a_j=\frac{(-1)^{i+{d_{\fa}}+j+1}\frac{\binom{d+{d_{\fa}}-1}{i}}{\binom{d+{d_{\fa}}-1}{{d_{\fa}}-j-1}}}{1-(-1)^{i+{d_{\fa}}+j+1}\frac{\binom{d+{d_{\fa}}-1}{i}}{\binom{d+{d_{\fa}}-1}{{d_{\fa}}-j-1}}}\sum_{m=j+1}^{{d_{\fa}}-i-1}a_m\binom{{d_{\fa}}+t}{m-j}.
\end{equation}
In order to simplify notation, we set $s_k:=a_{{d_{\fa}}-i-1-k}$ for $0\leq k\leq {d_{\fa}}-i-1$ and
\begin{equation*}
g_k:=\frac{(-1)^{k}\frac{\binom{d+{d_{\fa}}-1}{i}}{\binom{d+{d_{\fa}}-1}{i+k}}}{1-(-1)^{k}\frac{\binom{d+{d_{\fa}}-1}{i}}{\binom{d+{d_{\fa}}-1}{i+k}}} =\left((-1)^k \frac{\binom{d+d_a-1}{i+k}}{\binom{d+d_a-1}{i}}-1\right)^{-1}
\end{equation*}
for $1\leq k\leq {d_{\fa}}-i-1$. Hence, \eqref{eq:final} simplifies to 
\begin{equation*}
s_j=g_j\cdot \sum_{m=0}^{j-1}s_m\binom{{d_{\fa}}+t}{j-m}
\end{equation*}
for $1\leq j\leq {d_{\fa}}-i-1$. 
%Recall that $s_0=a_{d_{\fa}-j-1}$ can be chosen freely in $\mathbb{R}$. From this, one recursively obtains
%\begin{align*}
%s_j&=g_j\cdot \sum_{m=0}^{j-1}s_m\binom{{d_{\fa}}+t}{j-m}=g_j\cdot \left(\sum_{m=1}^{j-1}s_m\binom{{d_{\fa}}+t}{j-m}+\binom{d_{\fa}+t}{j}s_0\right)\\
%&=g_j\cdot \sum_{m=1}^{j-1}\binom{{d_{\fa}}+t}{j-m}\left(g_m\cdot\sum_{k=0}^{m-1}s_k\binom{{d_{\fa}}+t}{m-k}\right)+g_j\binom{d_{\fa}+t}{j}s_0\\
%&=g_j\cdot \sum_{m=0}^{j-1}\binom{{d_{\fa}}+t}{j-m}\left[g_m\cdot \sum_{k=1}^{m-1}\left(\binom{{d_{\fa}}+t}{m-k}\cdot \sum_{r=0}^{k-1}s_r\binom{{d_{\fa}}+t}{k-r}\right)\right]\\
%&\empty +g_js_0\left(\binom{d_{\fa}+t}{j}+g_j\sum_{m=1}^{j-1}\binom{d_{\fa}+t}{j-m}\binom{d_{\fa}+t}{m}\right)\\
%&=\cdots =\\
%&=g_j\cdot s_0\cdot \sum_{m=0}^{j-1}\sum_{1\leq i_1<\cdots <i_m\leq j-1}\left(\prod_{k=1}^m g_{i_k}\right)\binom{{d_{\fa}}+t}{j-i_m}\binom{{d_{\fa}}+t}{i_m-i_{m-1}}\cdots\binom{{d_{\fa}}+t}{i_2-i_1}\binom{{d_{\fa}}+t}{i_1-0}\\
%&=  s_0\sum_{m=0}^{j-1}\sum_{0=i_0<i_1<\cdots<i_m<i_{m+1}=j}\prod_{k=1}^{m+1}\left(g_{i_k}\binom{{d_{\fa}}+t}{i_k-i_{k-1}}\right)
%\end{align*}
%for $0\leq j\leq {d_{\fa}}-i-1$. 
This can be used to prove the following formula by induction with respect to $j$ ($1\leq j\leq {d_{\fa}}-i-1$):
\begin{equation}
\label{formula:sj}
s_j = s_0  \sum_{m=0}^{j-1}\sum_{0=i_0<i_1<\cdots<i_m<i_{m+1}=j}\prod_{k=1}^{m+1} g_{i_k}\binom{{d_{\fa}}+t}{i_k-i_{k-1}} 
\end{equation}
Indeed,
\begin{align*}
s_j &= g_j \sum_{l=1}^{j-1} s_l \binom{d_{\fa}+t}{j-l}      +  g_j s_0 \binom{d_{\fa}+t}{j}  \\
&= g_j \sum_{l=1}^{j-1} s_0 \sum_{m=0}^{l-1} \sum_{0=i_0 < i_1 < \cdots < i_m < i_{m+1}=l} \left( \prod_{k=1}^{m+1} g_{i_k} 
\binom{d_{\fa}+t}{i_k-i_{k-1}}  \right) \binom{d_{\fa}+t}{j-l} +  g_j s_0 \binom{d_{\fa}+t}{j}
\end{align*}
Now we switch the order of summation of the two outer sums and write $i_{m+1}$ instead of $l$:
\begin{align*}
s_j &=  g_j \sum_{m=0}^{j-2} s_0 \sum_{i_{m+1}=m+1}^{j-1} \sum_{0=i_0 < i_1 < \cdots < i_m < i_{m+1}} \left( \prod_{k=1}^{m+1} g_{i_k} 
\binom{d_{\fa}+t}{i_k-i_{k-1}}  \right) \binom{d_{\fa}+t}{j-i_{m+1}} +  g_j s_0 \binom{d_{\fa}+t}{j} \\
&= s_0 \sum_{m=0}^{j-2}  \sum_{0=i_0 < i_1 < \cdots < i_m < i_{m+1} < i_{m+2}=j} \prod_{k=1}^{m+2} g_{i_k} 
\binom{d_{\fa}+t}{i_k-i_{k-1}}   +  g_j s_0 \binom{d_{\fa}+t}{j} \\
&=  s_0 \sum_{m=1}^{j-1}  \sum_{0=i_0 < i_1 < \cdots  < i_{m} < i_{m+1}=j} \prod_{k=1}^{m+1} g_{i_k} 
\binom{d_{\fa}+t}{i_k-i_{k-1}}   +  g_j s_0 \binom{d_{\fa}+t}{j}
\end{align*}
This proves \eqref{formula:sj}.

Switching from $s_j$ back to $a_j$ by using the relation $a_j=s_{{d_{\fa}}-i-1-j}$ for $0\leq j \leq {d_{\fa}}-i-1$, we can conclude
\begin{equation*}
a_j=s_{{d_{\fa}}-i-1-j}= a_{d_{\fa}-i-1}\cdot\sum_{m=0}^{{d_{\fa}}-i-2-j}\sum_{0=i_0<i_1<\cdots<i_m<i_{m+1}={d_{\fa}}-i-1-j}\prod_{k=1}^{m+1} g_{i_k}\binom{{d_{\fa}}+t}{i_k-i_{k-1}}
\end{equation*} 
for $0\leq j\leq {d_{\fa}}-i-2$, where $a_{d_{\fa}-i-1}\in\mathbb{R}$ is arbitrary. E.\,g., we can set $a_{d_{\fa}-i-1}=1$. 
We summarize the above discussion and part of the results of Section \ref{sect:Spectrum} in the following proposition.
\begin{prop}\label{prop:eigenvectors}
Let ${d_{\fa}}\in\mathbb{N}$ and let $d,t$ be indeterminates. Then:
\begin{itemize}
\item[(i)] The matrix $A_{d_{\fa}}(d,t)$ has eigenvalues $\lambda_{d_{\fa}}(d,i)=(-1)^{{d_{\fa}}+1+i} \binom{d+{d_{\fa}}-1}{i}$ for $0\leq i\leq {d_{\fa}}-1$.
\item[(ii)] For $0\leq i\leq {d_{\fa}}-1$ the eigenspace of $A_{d_{\fa}}(d,t)$ for the eigenvalue $\lambda_{d_{\fa}}(d,i)$ is spanned by the vector $a=(a_0,\ldots,a_{{d_{\fa}}-1})^T$ defined by
\begin{align*}
a_j=
\begin{cases}
0, \qquad &\mbox{for } {d_{\fa}}-i\leq j\leq {d_{\fa}}-1\\
1, \qquad &\mbox{for } j={d_{\fa}}-i-1\\
\sum_{m=0}^{{d_{\fa}}-i-2-j}\sum_{0=i_0<i_1<\cdots<i_m<i_{m+1}={d_{\fa}}-i-1-j}\prod_{k=1}^{m+1}\left(g_{i_k}\binom{{d_{\fa}}+t}{i_k-i_{k-1}}\right), &\mbox{for } 0\leq j\leq {d_{\fa}}-i-2.
\end{cases}
\end{align*}
Here the coefficients $g_1,\ldots,g_{{d_{\fa}}-i-1}$ are given as 
\begin{equation*}
g_k=\left((-1)^k \frac{\binom{d+d_a-1}{i+k}}{\binom{d+d_a-1}{i}}-1\right)^{-1}.
\end{equation*}
\end{itemize}
\end{prop}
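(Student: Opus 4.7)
The plan is essentially to package the computation that was performed in the paragraphs preceding the proposition into a clean argument. For part (i), I would observe that $A_{d_{\fa}}(d,t)$ is upper triangular by definition, so its eigenvalues are just its diagonal entries. The $j$-th diagonal entry is $(-1)^{d_{\fa}+1}\binom{-d-j-1}{d_{\fa}-j-1}$, and a single application of the identity $\binom{n}{k}=(-1)^k\binom{k-n-1}{k}$ (together with the elementary observation that $(-1)^{d_{\fa}-j-1}=(-1)^{d_{\fa}+j+1}$) rewrites this as $\lambda_{d_{\fa}}(d,d_{\fa}-1-j)$. Reindexing by $i=d_{\fa}-1-j$ gives the desired list of eigenvalues.

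For part (ii), fix $i\in\{0,1,\ldots,d_{\fa}-1\}$. The eigenvector equation $A_{d_{\fa}}(d,t)\cdot a=\lambda_{d_{\fa}}(d,i)\cdot a$ is hard to attack directly because $A_{d_{\fa}}(d,t)$ has $\binom{-d_{\fa}-t}{j-i}$-entries, so I would multiply both sides from the left by $X_{d_{\fa}}(t):=\bigl(\binom{d_{\fa}+t}{k-j}\bigr)_{0\le j,k\le d_{\fa}-1}$. The Chu-Vandermonde summation collapses the product $X_{d_{\fa}}(t)\cdot A_{d_{\fa}}(d,t)$ to a diagonal matrix with entries $(-1)^{d_{\fa}+1}\binom{-d-j-1}{d_{\fa}-j-1}$. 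This converts the eigenvalue equation into the explicit backward recurrence~\eqref{eq:recursion} for the entries of $a$.

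From there I would solve the recurrence by backward substitution in $j=d_{\fa}-1,d_{\fa}-2,\ldots,0$. The key point, already noted in the text, is that for $j\neq d_{\fa}-1-i$ the coefficients $\binom{d+d_{\fa}-1}{i}$ and $\binom{d+d_{\fa}-1}{d_{\fa}-j-1}$ are polynomials in $d$ of different degrees, so $1-(-1)^{i+d_{\fa}+j+1}\binom{d+d_{\fa}-1}{i}/\binom{d+d_{\fa}-1}{d_{\fa}-j-1}$ is a nonzero rational function in $d$; hence it can be inverted in the field $\mathbb{C}(d)$. Going from $j=d_{\fa}-1$ downward, this forces $a_j=0$ for $d_{\fa}-i\le j\le d_{\fa}-1$, leaves $a_{d_{\fa}-i-1}$ as a free parameter (which I normalize to $1$), and produces the explicit inhomogeneous recurrence $s_j=g_j\sum_{m=0}^{j-1}s_m\binom{d_{\fa}+t}{j-m}$ for the remaining entries $s_k:=a_{d_{\fa}-i-1-k}$.

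The remaining step is to verify the closed form~\eqref{formula:sj} for $s_j$ by induction on $j$. This is a straightforward computation: plug the inductive hypothesis into the recurrence, swap the order of summation in the two outer sums, rename the inner index $l$ as $i_{m+1}$, and absorb the separate term $g_j\binom{d_{\fa}+t}{j}$ as the case $m=0$ (corresponding to the single step $0=i_0<i_1=j$) of the new sum. Translating back from $s$ to $a$ via $a_j=s_{d_{\fa}-i-1-j}$ yields precisely the formula in part (ii). I expect this induction/reindexing step to be the main technical obstacle, though it is really only bookkeeping; the conceptual heart of the proof is the trick of conjugating by $X_{d_{\fa}}(t)$ to turn $A_{d_{\fa}}(d,t)$ into a diagonal matrix, which isolates the recurrence for the eigenvector components.
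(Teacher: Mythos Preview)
Your proposal is correct and follows essentially the same approach as the paper's own argument: read off the diagonal entries for part~(i), then for part~(ii) left-multiply the eigenvector equation by $X_{d_{\fa}}(t)$, use Chu--Vandermonde to diagonalize, solve the resulting recurrence by backward substitution, and verify the closed form~\eqref{formula:sj} by induction. One small terminological slip: in your final sentence you call this ``conjugating by $X_{d_{\fa}}(t)$'', but in fact it is only a one-sided multiplication; this is harmless since $X_{d_{\fa}}(t)$ is upper unitriangular and hence invertible, so the transformed system is equivalent to the original.
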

From this proposition it is straightforward to derive the eigenvectors of the matrices $M_{d_{\fa}}(d,t)$. Indeed, as already mentioned, 
the eigenvectors of $M_{d_{\fa}}(d,t)$ can be obtained from those of $A_{d_{\fa}}(d,t)$ by multiplying them by $V_{d_{\fa}}$ from the left. 
Thus, if $a=(a_0,\ldots,a_{{d_{\fa}}-1})^T$ is the eigenvector of $A_{d_{\fa}}(d,t)$ corresponding to the eigenvalue $\lambda_{d_{\fa}}(d,i)$ 
as defined in Proposition \ref{prop:eigenvectors}, then $b:=V_{d_{\fa}} \cdot a$ is eigenvector of $M_{d_{\fa}}(d,t)$ corresponding to the 
eigenvalue $\lambda_{d_{\fa}}(d,t)$. Note, that since the vector $a$ is not the zero vector and since $V_{d_{\fa}}$ is invertible, 
we can conclude that the vector $b$ is not the zero vector. %\footnote{Eigentlich muss man das nicht wirklich dazu sagen...} 
%This follows from the fact that the vectors $v_0,v_1,\ldots,v_{{d_{\fa}}-1}$, which form the columns of $V_{d_{\fa}}$ are  a $\mathbb{R}$-vectorspace basis of $\mathbb{R}^{{d_{\fa}}}$. Indeed, by the Vandermonde determinant it holds that
%\begin{equation*}
%\det \left( \binom{X_i}{m} \right)_{1 \le i \le k, 0 \le m \le k-1}
%= \prod_{1 \le i < j \le k} \frac{X_j - X_i}{j-i}.
%\end{equation*}
%This implies that $\det\left(\binom{{d_{\fa}}-j-1}{m}\right)_{0\leq j,m\leq {d_{\fa}}-1}=(-1)^{\binom{{d_{\fa}}}{2}}$, thus the vectors $v_0,v_1,\ldots,v_{{d_{\fa}}-1}$ are linearly independent and since $\dim\mathbb{R}^{d_{\fa}}={d_{\fa}}$ they form a basis. 
We finally compute the vector $b=V_{d_{\fa}} \cdot a:=(b_0,\ldots,b_{{d_{\fa}}-1})^T$. For $0\leq k\leq {d_{\fa}}-1$ the $k$\textsuperscript{th} entry of $b$ is given by
\begin{align*}
b_k&=\sum_{j=0}^{{d_{\fa}}-i-1}a_j\binom{{d_{\fa}}-k-1}{j}\\
&=\binom{{d_{\fa}}-k-1}{{d_{\fa}}-i-1}+\sum_{j=0}^{{d_{\fa}}-i-2}\binom{{d_{\fa}}-k-1}{j}\left(\sum_{m=0}^{{d_{\fa}}-i-2-j}\sum_{0=i_0<i_1<\cdots<i_m<i_{m+1}={d_{\fa}}-i-1-j}\prod_{k=1}^{m+1} g_{i_k}\binom{{d_{\fa}}+t}{i_k-i_{k-1}} \right).
\end{align*}
We conclude this section by summarizing the main results in the following theorem.
\begin{theo} 
Let ${d_{\fa}}\in\mathbb{N}$ and let $d,t$ be indeterminates. 
For $0\leq i\leq {d_{\fa}}-1$ the eigenspace of $M_{d_{\fa}}(d,t)$ for the eigenvalue $\lambda_{d_{\fa}}(d,i)$ is spanned by the vector $b=(b_0,\ldots,b_{{d_{\fa}}-1})$ defined by 
\begin{equation*}
b_k=\binom{{d_{\fa}}-k-1}{{d_{\fa}}-i-1}+\sum_{j=0}^{{d_{\fa}}-i-2}\binom{{d_{\fa}}-k-1}{j}\left(\sum_{m=0}^{{d_{\fa}}-i-2-j}\sum_{0=i_0<i_1<\cdots<i_m<i_{m+1}={d_{\fa}}-i-1-j}\prod_{k=1}^{m+1} g_{i_k}\binom{{d_{\fa}}+t}{i_k-i_{k-1}} \right)
\end{equation*}
for $0\leq k\leq {d_{\fa}}-1$. 
Here the coefficients $g_1,\ldots,g_{{d_{\fa}}-i-1}$ are given as
\begin{equation*}
g_k=\left((-1)^k \frac{\binom{d+d_a-1}{i+k}}{\binom{d+d_a-1}{i}}-1\right)^{-1}.
\end{equation*}
\end{theo}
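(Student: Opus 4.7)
The plan is to observe that the theorem is an immediate corollary of Proposition~\ref{prop:eigenvectors} combined with the similarity transformation already established in Section~\ref{sect:Spectrum}, so the proof reduces to a direct matrix-vector multiplication.

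First I would recall that, by combining Lemma~\ref{lemTrafo} and Lemma~\ref{lem:vw}, one has $M_{d_{\fa}}(d,t) = W_{d_{\fa}}(d,t) \cdot V_{d_{\fa}}^{-1} = V_{d_{\fa}} \cdot A_{d_{\fa}}(d,t) \cdot V_{d_{\fa}}^{-1}$. Consequently, left-multiplication by $V_{d_{\fa}}$ provides a bijection between eigenvectors of $A_{d_{\fa}}(d,t)$ and those of $M_{d_{\fa}}(d,t)$ preserving eigenvalues. Since $V_{d_{\fa}}$ is invertible, nonzero eigenvectors are sent to nonzero eigenvectors, so a spanning vector of the $\lambda_{d_{\fa}}(d,i)$-eigenspace of $A_{d_{\fa}}(d,t)$ is mapped to a spanning vector of the corresponding eigenspace of $M_{d_{\fa}}(d,t)$; the latter is in fact one-dimensional because the eigenvalues $\lambda_{d_{\fa}}(d,0),\ldots,\lambda_{d_{\fa}}(d,d_{\fa}-1)$ are pairwise distinct as polynomials in $d$.

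Next I would take the vector $a=(a_0,\ldots,a_{d_{\fa}-1})^T$ from Proposition~\ref{prop:eigenvectors}, which satisfies $a_j=0$ for $d_{\fa}-i \le j \le d_{\fa}-1$, $a_{d_{\fa}-i-1}=1$, and the explicit nested-sum expression for $0 \le j \le d_{\fa}-i-2$. Using $V_{d_{\fa}} = \bigl(\binom{d_{\fa}-k-1}{j}\bigr)_{0 \le k,j \le d_{\fa}-1}$, the $k$\textsuperscript{th} entry of $b := V_{d_{\fa}} \cdot a$ is
$$b_k = \sum_{j=0}^{d_{\fa}-1} \binom{d_{\fa}-k-1}{j} a_j = \binom{d_{\fa}-k-1}{d_{\fa}-i-1} + \sum_{j=0}^{d_{\fa}-i-2} \binom{d_{\fa}-k-1}{j} a_j,$$
where the vanishing tail of $a$ truncates the outer summation and the contribution of $j=d_{\fa}-i-1$ (with $a_j=1$) yields the isolated binomial coefficient. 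Substituting the explicit formula for $a_j$ from Proposition~\ref{prop:eigenvectors} into the remaining sum gives the asserted expression for $b_k$ verbatim.

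There is essentially no obstacle: all the substantive work has already been done in establishing the similarity $V_{d_{\fa}}^{-1} M_{d_{\fa}}(d,t) V_{d_{\fa}} = A_{d_{\fa}}(d,t)$ and in the backward-substitution argument producing the nested sum for $a_j$. The theorem simply transports that formula through $V_{d_{\fa}}$, and since the statement records $b_k$ in raw summation form, no combinatorial simplification is needed in the last step.
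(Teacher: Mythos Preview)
Your proposal is correct and follows essentially the same approach as the paper: the paper also derives the theorem directly from Proposition~\ref{prop:eigenvectors} by applying the similarity $V_{d_{\fa}}^{-1} M_{d_{\fa}}(d,t) V_{d_{\fa}} = A_{d_{\fa}}(d,t)$ and computing $b = V_{d_{\fa}} \cdot a$, using the vanishing of $a_j$ for $j \ge d_{\fa}-i$ to truncate the sum and the value $a_{d_{\fa}-i-1}=1$ to isolate the leading binomial. Your write-up is in fact slightly more explicit about why the resulting eigenspace is one-dimensional.
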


\section{Discussion}\label{sect:conclusion}
The aim of this section is to collect some easy properties of the $h$-vector transformation of the Segre product. 
We will conclude this section with an open question.

%Recall that a vector $(c_0,\ldots,c_m)$ is called \emph{unimodal} if there exists $s$ such that $c_0\leq \ldots\leq c_{s-1}\leq c_s\geq c_{s+1}\geq \ldots \geq c_m$. \\
It is easy to see that the following properties are preserved under taking the Segre product of sequences.

\begin{prop}
Let $\fa=(a_n)_{n\geq 0}$ and $\fb=(b_n)_{n\geq 0}$ sequences of complex numbers such that their generating series are of the form \eqref{eq:powerSeries}.
\begin{itemize}
\item[(i)] If $h(\fa)$ and $h(\fb)$ are non-negative (entrywise), then so is $h(\fa\ast\fb)$.
\item[(ii)] Let $m_{\fa}$ and $m_{\fb}$ be the last non-zero entry of the $h$-vector of $\fa$ and $\fb$, respectively, and assume that $d_{\fa} - m_{\fa} = d_{\fb} - m_{\fb}$. If $h(\fa)$ and $h(\fb)$ are symmetric, i.\,e., $h_i(\fa)=h_{m_{\fa}-i}(\fa)$ and $h_i(\fb)=h_{m_{\fb}-i}(\fb)$ for all $i\geq 0$, then so is $h(\fa\ast\fb)$.
%\item[(iii)] If $h(\fa)$ and $h(fb)$ are unimodal, then so is $h(\fa\ast\fb)$.
\end{itemize}
\end{prop}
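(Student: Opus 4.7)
For part (i), I would return to the explicit double-sum formula \eqref{eq:newBounds} for $h_n(\fa\ast\fb)$ derived earlier in Section \ref{sect:Trafo}. Each term is a product $h_{n-i}(\fa)\,h_{n-j}(\fb)\binom{d_{\fa}+i-j-1}{i}\binom{d_{\fb}+j-i-1}{j}$; the first two factors are non-negative by assumption. For the two binomial coefficients, in the effective range $0\le i\le d_{\fb}-1$ and $0\le j\le d_{\fa}-1$ the numerator of $\binom{d_{\fa}+i-j-1}{i}$ is the product $(d_{\fa}+i-j-1)(d_{\fa}+i-j-2)\cdots(d_{\fa}-j)$ of $i$ consecutive integers each at least $d_{\fa}-j\ge 1$, so the binomial is a non-negative integer. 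The argument for $\binom{d_{\fb}+j-i-1}{j}$ is symmetric. Hence every term in \eqref{eq:newBounds} is non-negative, and so is $h_n(\fa\ast\fb)$.

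For part (ii), set $c:=d_{\fa}-m_{\fa}=d_{\fb}-m_{\fb}$ and $m:=m_{\fa}+d_{\fb}-1=m_{\fb}+d_{\fa}-1$. My plan is to show the stronger identity $h_n(\fa\ast\fb)=h_{m-n}(\fa\ast\fb)$ for every integer $n$, which will imply symmetry once $m$ is identified with $m_{\fa\ast\fb}$. Starting from \eqref{eq:newBounds}, I would apply the hypothesised symmetries $h_{n-i}(\fa)=h_{m_{\fa}-n+i}(\fa)$ and $h_{n-j}(\fb)=h_{m_{\fb}-n+j}(\fb)$, and then perform the re-indexing $i\mapsto d_{\fb}-1-i$, $j\mapsto d_{\fa}-1-j$. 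Under this substitution the arguments of the $h$-entries become $h_{m-n-i}(\fa)$ and $h_{m-n-j}(\fb)$---precisely those appearing in \eqref{eq:newBounds} for $h_{m-n}(\fa\ast\fb)$---while the product of binomials gets mapped to $m_{d_{\fb}-1-i,d_{\fa}-1-j}$, which by Corollary~\ref{cor:sym} equals $m_{i,j}$. The resulting sum is thus exactly $h_{m-n}(\fa\ast\fb)$.

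To close the argument I would verify that $m=m_{\fa\ast\fb}$. Setting $n=0$ in \eqref{eq:newBounds} isolates the term $i=j=0$, yielding $h_0(\fa\ast\fb)=h_0(\fa)\,h_0(\fb)$, which is non-zero because $h_0(\fa)=h_{m_{\fa}}(\fa)\ne 0$ and $h_0(\fb)=h_{m_{\fb}}(\fb)\ne 0$ by hypothesis. The symmetry just established then forces $h_m(\fa\ast\fb)\ne 0$ while $h_{m+k}(\fa\ast\fb)=h_{-k}(\fa\ast\fb)=0$ for $k\ge 1$, identifying $m$ as $m_{\fa\ast\fb}$.

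The main obstacle is not computational but the bookkeeping around the correct centre of symmetry: the condition $d_{\fa}-m_{\fa}=d_{\fb}-m_{\fb}$ is precisely what makes the two candidate centres $m_{\fa}+d_{\fb}-1$ and $m_{\fb}+d_{\fa}-1$ coincide. Conceptually, the symmetry of $\fh(\fa\ast\fb)$ is inherited from the rotational invariance of the rectangular matrix $M'_{d_{\fa},d_{\fb}}$ recorded in Corollary~\ref{cor:sym}; without that invariance, the re-indexing step could not return the bilinear kernel to its original form.
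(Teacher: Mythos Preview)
Your proposal is correct and follows essentially the same route as the paper: for (i) you spell out why the entries of $M'_{d_{\fa},d_{\fb}}$ are non-negative, and for (ii) you carry out the re-indexing that the paper only indicates by citing Corollary~\ref{cor:sym} together with \eqref{eq:trafo}. Your additional verification that the centre of symmetry is $m=m_{\fa}+d_{\fb}-1=m_{\fb}+d_{\fa}-1$ and that this equals $m_{\fa\ast\fb}$ makes the argument more complete than the paper's terse sketch.
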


\begin{proof}
Part (i) holds since all entries of the transformation matrix are non-negative.

Part (ii) follows from Corollary \ref{cor:sym}, the symmetrie of $h(\fa)$ and $h(\fb)$ and from \eqref{eq:trafo}.
\end{proof}

In \cite{BW-barycentric} and \cite{BrentiWelker} the $h$-vector transformations when passing from a simplicial complex to its barycentric subdivision and from a standard graded algebra to its $r$\textsuperscript{th} Veronese algebra are studied. In these article, the transformations are not only described explicitly but also the asymptotic behavior of the roots of the $h$-polynomials is worked out. In particular, it is shown that, in both cases, the $h$-polynomial becomes real-rooted after applying the barycentric subdivision operation once or a high enough Veronese algebra. It is natural to ask if a similar statement might be true for (high enough) Segre products of standard graded algebras respectively number sequences. Motivated by results of certain computer experiments, we propose the following conjecture.

\begin{con}\label{con:realRoots}
Let $\fa=(a_n)_{n\geq 0}$ be a sequence of complex numbers such that its generating series is of the form 
\begin{equation*}
\fa(t)=\sum_{n\geq 0}a_n t^n =\frac{h_0(\fa)+\cdots + h_{d_{\fa}-1}(\fa)t^{d_{\fa}-1}}{(1-t)^{d_{\fa}}},
\end{equation*}
where $h_i(\fa)\geq 0$ for all $0\leq i\leq d_{\fa}-1$. Let $\fa^{\ast r}(t)=\underbrace{(\fa\ast\cdots\ast \fa)}_{r \mbox{\tiny{ times}}}(t)$ be the $r$-th Segre product of the sequence $\fa$. Then, there exists a positive integer $R$ such that for $r\geq R$, the polynomial 
\begin{equation*}
\fh(\fa^{\ast r})(t)=\sum_{i\geq 0}^{r(d_{\fa}-1)}h_i(\fa^{\ast r}) t^i
\end{equation*}
has only non-positive real-roots.
\end{con}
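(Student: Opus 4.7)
The plan is to reduce Conjecture~\ref{con:realRoots} to a comparison between $\fh(\fa^{\ast r})(t)$ and the (reversed) Eulerian polynomial, then to close the argument by a perturbation estimate. Set $d:=d_{\fa}$ and view the sequence as $a_n=P(n)$ for a polynomial $P$ of degree $d-1$. Under the non-negativity hypothesis one has $P(n)\geq 0$ for all integers $n\geq 0$, and (assuming $\fa\not\equiv 0$) the leading coefficient $c:=\fh(\fa)(1)/(d-1)!$ of $P$ is strictly positive. Setting $D:=r(d-1)$, the Segre power $\fa^{\ast r}$ has dimension $D+1$, and Proposition~\ref{transform:ph} applied to $\fa^{\ast r}$ together with the identity $\binom{k-j-D-2}{k-j}=(-1)^{k-j}\binom{D+1}{k-j}$ yields the explicit formula
\begin{equation*}
h_k(\fa^{\ast r}) = \sum_{i=0}^{k} (-1)^i \binom{D+1}{i} P(k-i)^r, \qquad 0\leq k\leq D.
\end{equation*}

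The central observation is that replacing $P(k-i)^r$ by its leading behavior $c^r(k-i)^D$ produces, via Worpitzky's identity $\sum_{i=0}^{k}(-1)^i\binom{D+1}{i}(k-i)^D=A(D,k)$, exactly $c^r$ times the reversed Eulerian polynomial $\widetilde{A}_D(t):=\sum_k A(D,k)t^{D-k}$, whose only zeros are a simple root at $0$ together with $D-1$ distinct strictly negative real numbers (Frobenius). Expanding $P(n)^r=c^r n^D+\sum_{s<D}c_{r,s}n^s$ and applying the finite-difference kernel termwise gives the decomposition
\begin{equation*}
\fh(\fa^{\ast r})(t) = c^r\widetilde{A}_D(t) + E_r(t),
\end{equation*}
where the lower-order coefficients obey, by the multinomial expansion of $P(n)^r$, bounds of the form $|c_{r,D-m}|\leq C_m\,r^m\,c^r$ for each fixed $m$.

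The final step would be a Hurwitz-type perturbation argument: encircle each real root of $\widetilde{A}_D$ by a small disk and verify $|E_r(t)|<c^r|\widetilde{A}_D(t)|$ on the boundary, which would transfer non-positive real-rootedness to $\fh(\fa^{\ast r})(t)$. The main obstacle is quantitative. As $r$ grows, the roots of $\widetilde{A}_D$ cluster along a known limit arc on the negative real axis with consecutive-root gaps of order $1/D$, so the contour-wise bounds on $E_r$ have to beat this shrinking gap uniformly in $D$. A natural route is a saddle-point analysis of the rational generating function $\sum_{n\geq 0}P(n)^r t^n=\fh(\fa^{\ast r})(t)/(1-t)^{D+1}$, compared to $\sum_n n^D t^n=\widetilde{A}_D(t)/(1-t)^{D+1}$ by treating $(P(n)/(cn^{d-1}))^r=(1+O(1/n))^r$ as a controlled multiplicative perturbation at the relevant saddles. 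Alternatively, one might attempt to realize $\fh(\fa^{\ast(r+1)})$ inductively as a positive combination of $\fh(\fa^{\ast r})$-type polynomials sharing a common interlacer, via the positivity of the entries of the transformation matrix $M'_{d_\fa,d_\fa}$, and then invoke classical interlacing theorems to propagate real-rootedness step by step.
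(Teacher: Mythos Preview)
The statement is presented in the paper as an open \emph{conjecture}; the authors give no proof. They explicitly remark that the real-rootedness techniques used for barycentric subdivision and for the Veronese transformation fail here because the degree of $\fh(\fa^{\ast r})(t)$ grows with $r$, and they leave the question open. There is thus no proof in the paper for your attempt to be compared against.

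Your proposal is not a proof either, as you yourself flag. The finite-difference formula for $h_k(\fa^{\ast r})$ and the identification of the leading contribution with $c^{r}$ times the degree-$D$ Eulerian polynomial are correct and natural. The genuine gap is exactly the one you name: the reference polynomial $\widetilde{A}_D$ depends on $r$ through $D=r(d-1)$, its consecutive-root spacings shrink like $1/D$, and your bound $|c_{r,D-m}|\leq C_m\,r^{m}c^{r}$ is asserted only for each \emph{fixed} $m$, not uniformly over the full range $0\leq m\leq D$ that the Rouch\'e/Hurwitz step requires. Nothing in the proposal controls $E_r$ at the scale needed on circles of radius $O(1/D)$ around the Eulerian roots. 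The alternative interlacing suggestion is only a heuristic: non-negativity of the entries of $M'_{d_\fa,d_\fa}$ does not by itself produce a common interlacer, and the jump in degree from $\fh(\fa^{\ast r})$ to $\fh(\fa^{\ast(r+1)})$ is precisely the obstruction the authors single out. In short, you have located the right leading object but have not closed the argument; the conjecture remains open both in the paper and in your write-up.
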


We want to remark that the techniques from \cite{BW-barycentric} and \cite{BrentiWelker} are not applicable in the above setting since those 
treat sequences of polynomials $(p_r)_{r\geq 0}$ such that all $p_r$ are of the same degree. However, in our case the degree of the polynomials 
$\fh(\fa^{\ast r})(t)$ grows linearly in $r$, which makes the situation more complicated. Also other results on real-rootedness of polynomials,
 see e.\,g., \cite{BorceaBraenden,Brenti,Wagner} and references therein, usually require the considered polynomials to be of equal degree so
that they cannot be used for proving Conjecture \ref{con:realRoots}.

\bibliography{biblio}
  \bibliographystyle{plain}

\end{document}